\numberwithin{equation}{section}
\newtheorem{theorem}{Theorem}
\newtheorem{thm}[theorem]{Theorem}
\newtheorem{cor}[theorem]{Corollary}
\newtheorem{lemma}[theorem]{Lemma}
\newtheorem{prop}[theorem]{Proposition}
\theoremstyle{definition}
\newtheorem{defn}[theorem]{Definition}
\newtheorem{problem}[theorem]{Problem}
\theoremstyle{remark}
\newcommand{\ds}{\displaystyle}
\newcommand{\eps}{\varepsilon}
\newcommand{\cB}{\mathcal{B}}
\newcommand{\cF}{\mathcal{F}}
\newcommand{\cA}{\mathcal{A}}
\newcommand{\cP}{\mathcal{P}}
\newcommand{\cE}{\mathcal{E}}
\newcommand{\SB}{\text{SB}}
\newcommand{\Q}{\mathbb{Q}}\newcommand{\emb}{\hookrightarrow}
\newcommand{\N}{\mathbb{N}}
\newcommand{\R}{\mathbb{R}}
\newcommand{\NN}{{\mathbb{N}^{<\mathbb{N}}}}
\newcommand{\WF}{\text{WF}}
\newcommand{\IF}{\text{IF}}
\newcommand{\Tr}{\text{Tr}}
\begin{document}

\title[Complexity of some classes of Banach spaces.]{On the  complexity of some inevitable classes of separable Banach spaces.}
\subjclass[2010]{Primary: 46B20} 
 \keywords{ Minimal Banach spaces, continuously tight Banach spaces, HI spaces, descriptive set theory, trees, Banach spaces dichotomies.}
\author{ B. M. Braga.}
\address{Department of Mathematics, Statistics, and Computer Science (M/C 249)\\
University of Illinois at Chicago\\
851 S. Morgan St.\\
Chicago, IL 60607-7045\\
USA}\email{demendoncabraga@gmail.com}
\date{}
\date{}

\begin{abstract}
In this paper, we study the descriptive complexity of some inevitable classes of Banach spaces. Precisely, as shown in \cite{Go}, every Banach space either contains a hereditarily indecomposable subspace or an unconditional basis, and, as shown in \cite{FR}, every Banach space either contains a minimal subspace or a continuously tight subspace. In these notes, we study the complexity of those inevitable classes as well as the complexity of containing a subspace in any of those classes.
\end{abstract}
\maketitle

\section{Introduction.}\label{intro}

Let $\SB=\{X\subset C(\Delta)\mid X\text{ is linear and closed}\}$ be our coding for the separable Banach spaces, and endow $\SB$ with the Effros-Borel structure (for definitions, see Section \ref{definition}). Given a separable Banach space $X\in\SB$, it is natural to ask about  the complexity of the isomorphism class $\langle X\rangle=\{Y\in\SB\mid Y\cong X\}$.  Is $\langle X\rangle$ Borel, analytic, coanalytic? This question has only been solved for some very specific Banach spaces. For example,  S. Kwapien showed (see \cite{Kw}, Proposition $3.1$) that a Banach space $X$ is isomorphic to a Hilbert space if, and only if, $X$ has both type and cotype equal $2$. As $\ell_2$ is the only separable Hilbert space (up to isomorphism), this gives us a characterization of separable Banach spaces which are  isomorphic to $\ell_2$ in terms of type and cotype. Using this characterization,  it is not hard to show that $\langle \ell_2\rangle$ is Borel (see \cite{B}, page $130$). In fact, $\langle \ell_2\rangle$ is the only known example of a Borel isomorphism class. We recall the following problem (see \cite{B}, Problem 2.9).

\begin{problem}
Let $X\in\SB$ be a separable Banach space whose isomorphism class $\langle X\rangle$ is Borel. Is $X$ isomorphic to $\ell_2$? 
\end{problem}

For some classical spaces, such as $L_p[0,1]$, with $p\neq 2$, Pelczynski's universal space $U$, and $C(\Delta)$, it had been shown that their isomorphism classes are analytic non Borel (see \cite{B}, page $130$ and Theorem $2.3$, and \cite{K}, Theorem $33.24$, respectively). 

The problem of classifying Banach spaces up to isomorphism is extremely complex. Indeed, considering the isomorphism relation $\cong$ as a subset of $\SB\times\SB$, we have that $\cong$ is a complete analytic equivalence relation (see \cite{FLR}, Theorem $31$). In a more precise sense, separable Banach spaces up to isomorphism may serve as a complete invariant for any other reasonable class of mathematical objects. Therefore, it makes sense to study easier problems such as classifying Banach spaces up to its subspaces. We have then another natural problem: given a Banach space $X\in\SB$, what can we say about the  complexity of $\text{C}_X=\{Y\in\SB\mid X\emb Y\}$? This question had been solved by B. Bossard (see \cite{B}, Corollary $3.3$).

\begin{thm}\textbf{(Bossard)}\label{nonborel}
Let $X\in\SB$. If $X$ is finite dimensional, then $\text{C}_X$ is Borel. If $X$ is infinite dimensional, then $\text{C}_X$ is complete analytic. 
\end{thm}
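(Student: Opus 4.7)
I split the argument into three parts, handling the easier direction first.

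\emph{Finite-dimensional case.} Suppose $\dim X=n<\infty$. Then $X\emb Y$ is equivalent to $\dim Y\geq n$. Let $d_k\colon\SB\to C(\Delta)$ be the Kuratowski--Ryll-Nardzewski Borel selectors giving a dense sequence in every $Y\in\SB$. The condition $\dim Y\geq n$ can be written as a countable union over $(k_1,\ldots,k_n)\in\N^n$ of the Borel event ``$d_{k_1}(Y),\ldots,d_{k_n}(Y)$ are linearly independent''; this is Borel because for each rational tuple $(a_i)$ the map $Y\mapsto\|\sum a_i d_{k_i}(Y)\|$ is Borel, and independence can be tested on rationals. Hence $\text{C}_X$ is Borel.

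\emph{Analytic upper bound for all $X$.} Fix a dense sequence $(x_k)\subset X$. A standard density argument shows $X\emb Y$ if and only if there exist $K\in\N$ and $f\in\N^{\N}$ such that, for every finitely supported rational sequence $(a_i)$,
\[
K^{-1}\Big\|\sum a_i x_i\Big\|\leq \Big\|\sum a_i d_{f(i)}(Y)\Big\|\leq K\Big\|\sum a_i x_i\Big\|.
\]
The inner condition is Borel in $(Y,f,K)$, so projecting out $f$ and $K$ exhibits $\text{C}_X$ as a countable union of projections of Borel sets, hence analytic.

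\emph{Hardness for infinite-dimensional $X$.} Here I would construct a Borel reduction from the complete analytic set $\IF$ of ill-founded trees on $\N$ into $\text{C}_X$. Fix an increasing exhaustion $X=\overline{\bigcup_n X_n}$ by finite-dimensional subspaces. To each tree $T\subset\NN$ attach a tree-sum-type space $Y_T\in\SB$ in which every node $s\in T$ of length $n$ carries an isometric copy of $X_n$, the whole construction being realized inside a fixed ambient space (e.g.\ $C(\Delta)$) via a Borel prescription so that $T\mapsto Y_T$ is Borel. If $T\in\IF$, pick an infinite branch $\sigma\in[T]$; a diagonal limit along the nested copies of $X_n$ sitting at the nodes $\sigma\restriction n$ produces an isomorphic copy of $X$ inside $Y_T$, so $X\emb Y_T$.

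\emph{Main obstacle.} The delicate direction is the converse: $T$ well-founded $\Rightarrow X\not\emb Y_T$. I would proceed by transfinite induction on the rank of $T$. Given a hypothetical embedding $X\emb Y_T$, a gliding-hump/stabilization argument should produce a normalized sequence in $Y_T$ equivalent to a basic sequence of $X$ whose supports concentrate on a single branch of $T$, contradicting well-foundedness. Choosing the tree norm on $Y_T$ so that this stabilization succeeds — so that \emph{only} infinite branches, and not finite amalgamations of the $X_n$'s, can support copies of $X$ — is the technical heart of the argument and is the substance of Bossard's original construction.
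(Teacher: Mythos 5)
First, a framing remark: the paper does not prove this theorem; it is quoted from Bossard (\cite{B}, Corollary 3.3), so there is no internal proof to compare against, and your attempt has to be judged on its own. Your finite-dimensional case is correct ($X\emb Y$ iff $\dim Y\geq\dim X$, which is Borel via the selectors exactly as you say). The analytic upper bound has a small but real flaw: you cannot in general replace the images $T(x_i)$ of a \emph{dense} sequence by elements $d_{f(i)}(Y)$ of a fixed countable dense subset of $Y$ while preserving a two-sided $K$-estimate over all finite rational combinations, because a dense sequence has arbitrary near-linear-dependencies — if $\bigl\|\sum a_ix_i\bigr\|$ is tiny while $\sum|a_i|$ is huge, any perturbation of the images violates the upper estimate, and exact linear relations among the $x_i$ would force exact relations among the chosen $d_{f(i)}(Y)$, which a generic dense set does not contain. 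The standard repair is to quantify existentially over $(y_i)\in C(\Delta)^{\N}$ together with the Borel condition ``$y_i\in Y$ for all $i$'' and take $y_i=T(x_i)$ exactly; this still exhibits $\text{C}_X$ as a projection of a Borel set, hence analytic.

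The genuine gap is the hardness direction, which is the entire content of the theorem and which you do not prove: you explicitly defer ``choosing the tree norm so that the stabilization succeeds,'' but that choice \emph{is} Bossard's theorem, and the template you propose fails for the most important test cases. If the norm on $Y_T$ is any unconditional tree norm of $\ell_p$-sum or $c_0$-sum type, then Lemma \ref{arroto} of this paper shows that for \emph{well-founded} $T$ the space $Y_T$ is $\ell_p$-saturated (resp.\ $c_0$-saturated); so for $X=\ell_p$ or $X=c_0$ one gets $X\emb Y_T$ for every infinite tree $T$ and the reduction collapses. Worse, your stabilization heuristic points the wrong way: Lemma \ref{lemageral} shows that in a well-founded unconditional tree space every infinite-dimensional subspace contains a block sequence with \emph{completely incomparable} supports — the gliding hump concentrates on antichains, not on branches — so the contradiction you hope for (``supports concentrate on a single branch'') is not what the combinatorics produces. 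The actual difficulty is to design, uniformly in an \emph{arbitrary} infinite-dimensional $X$, a tree norm for which antichain-supported sequences never span a copy of $X$ while branches still assemble $X$; the paper's Subsection \ref{sectionminimal} carries this out for the single case $X=c_0$ via a Tsirelson-type tree norm, which indicates how nontrivial the well-founded direction is. As written, your proposal establishes the two easy bounds but not completeness.
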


In \cite{Go2}, W. T. Gowers had started with a new classification theory for Banach spaces. Indeed, after Gowers and Maurey had solved the unconditional basis problem, and proved the existence of hereditarily indecomposable Banach spaces (see \cite{GM}), Gowers proved that every Banach space either contains a subspace with an unconditional basis, or a hereditarily indecomposable subspace (see \cite{Go}). Later, in \cite{Go2}, Gowers used Ramsey methods in order to refine his dichotomy. Many other dichotomies had been proved by V. Ferenczi and C. Rosendal in \cite{FR}, for example, their main dichotomy says that   every Banach space either contains a minimal subspace or a continuously tight subspace (see \cite{FR}, Theorem 3.13).

In these notes, we study the descriptive complexity of the inevitable classes above as well as the complexity of containing a subspace in any of those inevitable classes. The table below summarizes the lower and upper bounds known for the complexity of those classes. In the table below, an asterisk before the complexity indicates that this bound was already known before this paper. All the other bounds are either computed in this paper or are given by 
trivial computations. 

\bigskip
\begin{center}
\begin{tabular}{ lll }
\hline

Classes of Banach spaces& Lower bound&Upper bound\\
\hline
Minimal spaces&&$\ \ \ \ \  \Delta^1_2$ \\
Containing a minimal subspace\ \ \ \ \ &$\ \ \ \ \ \Sigma^1_1$-hard  &$(*)\  \Sigma^1_2$   \\
Continuously tight spaces&$\ \ \ \ \  \Pi^1_1$-hard& $\ \ \ \ \ \Sigma^1_2$\\
Containing a continuously tight subspace&$\ \ \ \ \ \Sigma^1_1$-hard&$\ \ \ \ \  \Sigma^1_2$ \\
HI spaces&$\ \ \ \ \ \Pi^1_1$-hard \ \ \ &$\ \ \ \ \  \Pi^1_1$\\
Containing a HI subspace&$\ \ \ \ \ \Sigma^1_1$-hard&$\ \ \ \ \ \Sigma^1_2$\\
Spaces with unconditional basis & &$\ \ \ \ \     \Sigma^1_1$\\
Containing unconditional basis&$\ \ \ \ \  \Sigma^1_1$-hard&$\ \ \ \ \  \Sigma^1_1$\\
\hline
\end{tabular}
\end{center}
\bigskip

In particular, we completely compute the descriptive complexity of the class of hereditarily indecomposable spaces, and the class of spaces containing an unconditional basis, by showing those classes are complete coanalytic and complete analytic, respectively. 

The properties of being a continuously tight space and of having an unconditional basis are more properties about basis then about the spaces themselves. Therefore, it is more natural to ask  what is the complexity of the set of continuously tight basis, and the complexity of the set of unconditional basis. We have the following.

\bigskip
\begin{center}
\begin{tabular}{ lll }
\hline

Classes of basis& Lower bound&Upper bound\\
\hline
Continuously tight basis& $\ \ \ \ \ \Pi^1_1$-hard\ \ \ \ &$\ \ \ \ \  \Sigma^1_2$ \\
Unconditional basis\ \ \ \ \ \ & &\ \ \ \ \  Borel\\
\hline
\end{tabular}
\end{center}
\bigskip

This paper is organized in the following way. In Section \ref{definition}, we give all the basic background on both descriptive set theory and the theory of Banach spaces necessary for these notes. In Section \ref{sectionlema}, we prove a general lemma for Banach spaces with an unconditional basis $(x_{s})_{s\in\theta}$ indexed by a tree $\theta\in\WF$. In Section \ref{sectionlp}, we work with $\ell_p$-Baire sums of basic sequences as a tool to compute lower bounds for the complexity of classes of Banach spaces.  Then, Theorem \ref{lema} gives a sufficient condition for the set of separable Banach spaces containing a subspace in a given class to be $\Sigma^1_1$-hard (in particular, non Borel). 

Section \ref{newnew} is dedicated to the computation of the complexities in the tables above. We apply  Theorem \ref{lema} to the class of spaces containing a  hereditarily indecomposable subspace (Subsection \ref{masha}, Corollary \ref{hhii}), and to the class of spaces containing a continuously tight subspace (Subsection \ref{sectiontight}, Corollary \ref{cu}), obtaining a lower bound for the complexity of those classes.

In order to obtain the results above, we rely on the method of $\ell_p$-Baire sums of separable Banach spaces (see Lemma \ref{arroto}). However,  this method does not allow us to obtain any information about the complexity of the class of spaces (i) containing an unconditional basis, (ii) containing a minimal subspace, or (iii) containing a continuously tight subspace. Hence, in Subsection \ref{sectionminimal},  we define a parameterized version of Tsirelson space in order to show that the class of spaces containing a minimal  subspace is $\Sigma^1_1$-hard. Such parametrization will show that it is impossible to Borel separate the set of separable Banach spaces containing $c_0$ from the set of Tsirelson-saturated Banach spaces (see Theorem \ref{lemaminimal}). In Subsection \ref{sectiontightmesmo}, we use the methods of Subsection \ref{sectionminimal} in order to show that the class of continuously tight spaces is $\Pi^1_1$-hard (Theorem \ref{tighttighthu}).

At last, in Subsection \ref{hiGM}, we follow Argyros presentation  of how to construct HI extensions of ground norms (\cite{AT}, Chapters II and III) in order to  show that the set of hereditarily indecomposable separable Banach spaces is  complete coanalytic (Theorem \ref{hhhhiiii}). We also obtain that the set of hereditary indecomposable Banach spaces cannot be Borel separated from the set of Banach spaces containing $\ell_1$.  By the same methods, in Subsection \ref{ubub}, we show  that the class of Banach spaces containing an unconditional basis is complete analytic (Theorem \ref{ubca}).

\begin{problem}
For all the classes in the tables above, we have only completely computed the descriptive complexity of the set of HI spaces, and the set of spaces containing an unconditional basis. What are the exact complexities of the remaining classes?
\end{problem}

At last, we would like to make a small remark. In \cite{FR}, the authors actually present Theorem 1.1 as their main dichotomy, which says that every Banach space contains either a minimal subspace or a tight subspace. This dichotomy does not explicitly say anything about continuously tight spaces. However,  their proof for Theorem 1.1 shows that any Banach space $X$ must contain either a minimal Banach subspace or a continuously tight Banach subspace (see \cite{FR}, Theorem 3.13).

 We have two reasons for choosing to deal with continuously tight spaces instead of tight spaces, (i) we easily get better estimates for the upper bound of continuously tight spaces and spaces containing a continuously tight subspace, (ii) Rosendal had shown (see \cite{R}, Appendix) that, for minimal Banach spaces not containing $c_0$, there exists a notion of being continuously minimal, and this notion coincides with being minimal (see Subsection \ref{outrodia}).

\section{Background.}\label{definition}

A separable metric space is said to be a \emph{Polish space} if there exists an equivalent metric in which it is complete. A continuous image of a Polish space into another Polish 
space is called an \emph{analytic} set and a set whose complement is analytic is called \emph{coanalytic}. A measure space $(X,\mathcal{A})$, where $X$ is a set and $\mathcal{A}$ is a 
$\sigma$-algebra of subsets of $X$, is called a \emph{standard Borel space} if there exists a Polish topology on this set whose Borel $\sigma$-algebra coincides with $\mathcal{A}$. We 
define Borel, analytic and coanalytic sets in standard Borel spaces by saying that these are the sets that, by considering a compatible Polish topology, are Borel, analytic, and 
coanalytic, respectively. Observe that this is well defined, i.e., this definition does not depend on the Polish topology itself but on its Borel structure. A function between two 
standard Borel spaces is called \emph{Borel measurable} if the inverse image of each Borel subset of its codomain is Borel in its domain. We usually refer to Borel measurable functions 
just as Borel functions. Notice that, if you consider a Borel function between two standard Borel spaces, its inverse image of analytic sets (resp. coanalytic) is analytic (resp. 
coanalytic) (see \cite{S}, Proposition $1.3$, page $50$). 

Given a Polish space $X$, the set of analytic (resp. coanalytic) subsets of $X$ is denoted by $\Sigma^1_1(X)$ (resp. $\Pi^1_1(X)$). We usually omit $X$, and simply write $\Sigma^1_1$, or $\Pi^1_1$. Hence, the terminology $\Sigma^1_1$-set (resp. $\Pi^1_1$-set) is used to refer to analytic sets (resp. coanalytic sets). We define the \emph{projective hierarchy} by induction of $n$. We say a subset of a standard Borel space  is  $\Pi^1_n$ if it is the complement of a $\Sigma^1_n$-set, and we say that a subset is $\Sigma^1_{n+1}$ if it is a Borel image of a $\Pi^1_n$-set (see \cite{D}, Chapter 1). For each $n\in\N$, we denote by $\Delta^1_n$ the subsets of a standard Borel space which are both $\Sigma^1_n$ and $\Pi^1_n$, i.e., $\Delta^1_n=\Sigma^1_n\cap \Pi^1_n$.

Let $X$ be a standard Borel space. An analytic (resp. coanalytic) subset $A\subset X$ is said to be \emph{complete analytic} (resp. \emph{complete coanalytic}) if for every standard Borel space $Y$ and every analytic subset $B\subset Y$  (resp. coanalytic), there exists a Borel function $f:Y\to X$ such that $f^{-1}(A)=B$. This function is called a \emph{Borel reduction} from $B$ to $A$, and $B$ is said to be \emph{Borel reducible} to $A$. 

Let $X$ be a standard Borel space. We call a subset $A\subset X$ \emph{$\Sigma^1_n$-hard} (resp. \emph{$\Pi^1_n$-hard}) if for all standard Borel space $Y$ and all $\Sigma^1_n$-set $B\subset Y$  (resp. $\Pi^1_n$-set) there exists a Borel reduction from $B$ to $A$. Therefore, saying that a set $A\subset X$ is $\Sigma^1_n$-hard (resp. $\Pi^1_n$-hard)  means that $A$ is at least as complex as  any $\Sigma^1_n$-set (resp. $\Pi^1_n$-set) in the projective hierarchy. With this terminology we have that $A\subset X$ is  $\Sigma^1_n$-complete (resp. $\Pi^1_n$-complete) if, and only if, $A$ is  $\Sigma^1_n$-hard (resp. $\Pi^1_n$-hard) and $\Sigma^1_n$ (resp. $\Pi^1_n$). 

As there exist analytic non Borel (resp. coanalytic non Borel) sets we have that  $\Sigma^1_1$-hard (resp. $\Pi^1_1$-hard) sets are non Borel. Also, if $X$ is a standard Borel space, $A\subset X$, and there exists a Borel reduction from a $\Sigma^1_1$-hard (resp. $\Pi^1_1$-hard) subset $B$ of a standard Borel space $Y$ to $A$, then $A$ is  $\Sigma^1_1$-hard (resp. $\Pi^1_1$-hard). If $A$ is also analytic (resp. coanalytic), then $A$ is $\Sigma^1_1$-complete (resp. $\Pi^1_1$-complete). 

Consider a Polish space $X$ and let $\cF(X)$ be the set of all its non empty closed sets. We endow $\cF(X)$  with the \emph{Effros-Borel structure}, i.e., the $\sigma$-algebra 
generated by

$$\{F\subset X \mid F\cap U\neq \emptyset\},$$\\
\noindent where $U$ varies among the open sets of $X$. It can be shown that $\cF(X)$, endowed with the Effros-Borel structure, is a standard Borel space (see \cite{K}, Theorem $12.6$).  The following well known lemma (see 
\cite{K}, Theorem $12.13$) will be crucial in some of our proofs.

\begin{lemma}\textbf{(Kuratowski and Ryll-Nardzewski selection principle)}\label{lll}
Let $X$ be a Polish space. There exists a sequence of Borel functions $(S_n)_{n\in\N}:\cF(X)\to X$ such that $\{S_n(F)\}_{n\in\N}$ is dense in $F$, for all closed $F\subset X$.
\end{lemma}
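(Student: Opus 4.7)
The plan is to reduce the problem to the existence of a single Borel selector $S\colon\cF(X)\to X$ with $S(F)\in F$ for every $F$, and then to produce the sequence $(S_n)$ by localizing $S$ to countably many pieces using a countable basis of $X$. More precisely, fix a countable basis $(U_n)_{n\in\N}$ of the topology of $X$ consisting of open sets of arbitrarily small diameter, and define, for each $n$, a Borel map $\Phi_n\colon\cF(X)\to\cF(X)$ by $\Phi_n(F)=F\cap\overline{U_n}$ whenever $F\cap U_n\neq\emptyset$, and $\Phi_n(F)=F$ otherwise. Note that $\Phi_n$ is well-defined because $F\cap\overline{U_n}$ is closed and non-empty in the first case, and that $\{F\in\cF(X)\mid F\cap U_n\neq\emptyset\}$ is Effros-Borel by definition. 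Setting $S_n=S\circ\Phi_n$ gives a Borel map, and density follows because for any $x\in F$ and any neighbourhood $V$ of $x$ in $X$ there exists $U_n$ with $x\in U_n$ and $\overline{U_n}\subset V$, so that $S_n(F)\in F\cap\overline{U_n}\subset F\cap V$.

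To construct the selector $S$, I would fix a complete compatible metric $d\leq 1$ on $X$ and a countable dense set $\{d_k\}_{k\in\N}\subset X$, and build $S(F)$ by a greedy Cauchy construction. At stage $1$, let $n_1(F)$ be the least $k$ such that $B(d_k,2^{-1})\cap F\neq\emptyset$; at stage $j+1$, let $n_{j+1}(F)$ be the least $k$ such that
\[
d(d_k,d_{n_j(F)})<2^{-(j+1)}\quad\text{and}\quad B(d_k,2^{-(j+1)})\cap F\neq\emptyset.
\]
The triangle inequality guarantees the balls are nested enough that $(d_{n_j(F)})_{j\in\N}$ is Cauchy, and completeness yields a limit $S(F)=\lim_j d_{n_j(F)}$. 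Since each ball $B(d_{n_j(F)},2^{-j})$ meets $F$, one picks witnesses $x_j\in F$ converging to $S(F)$, so $S(F)\in\overline{F}=F$.

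Borel-measurability of $S$ is automatic once one shows each $n_j$ is a Borel function of $F$: this holds because $n_j(F)$ is defined by a countable disjunction and conjunction of the Effros-Borel conditions $B(d_k,2^{-j})\cap F\neq\emptyset$, which generate the structure on $\cF(X)$. Then $S$ is the pointwise limit of the Borel functions $F\mapsto d_{n_j(F)}$, hence Borel. Composing with the Borel maps $\Phi_n$ and verifying that the collection separates points as sketched above finishes the proof.

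The main subtlety is ensuring that the greedy construction of $n_{j+1}$ never gets stuck and that the sets $\{F\mid n_j(F)=k\}$ are Borel; both reduce to the fact that the Effros-Borel structure is generated precisely by the hitting sets of basic open sets, so any selection rule phrased in these terms is automatically Borel. The only nontrivial point is verifying that at every stage some admissible $k$ exists, which follows because $F\cap B(d_{n_j(F)},2^{-j})\neq\emptyset$ and the basis elements of diameter $<2^{-(j+1)}$ centred near such a witness satisfy the required conditions.
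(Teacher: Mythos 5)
The paper offers no proof of this lemma; it is quoted from \cite{K}, Theorem 12.13, so your proposal is judged against the standard argument. Your architecture is the right one (a single Borel selector obtained by a greedy Cauchy construction, then localized along a countable basis), but each half contains a step that fails as written. The localization is the more serious problem: you set $\Phi_n(F)=F\cap\overline{U_n}$ and assert that $S\circ\Phi_n$ is Borel, but you only verify that the case distinction $\{F\mid F\cap U_n\neq\emptyset\}$ is Borel, not that $\Phi_n$ itself is. Intersection with a fixed closed set is \emph{not} a Borel operation on $\cF(X)$: the sets $\{F\mid F\cap\overline{U_n}\cap V\neq\emptyset\}$ are not hitting sets of open sets, and in fact for a fixed closed $D\subset\N^\N\times\N^\N$ whose first projection is $\Sigma^1_1$-complete, even $\{F\mid F\cap D\neq\emptyset\}$ is non-Borel (pull it back along the continuous map $x\mapsto\{x\}\times\N^\N$ to recover that projection). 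The standard fix is to use $\Psi_n(F)=\overline{F\cap U_n}$ when $F\cap U_n\neq\emptyset$: then $\Psi_n(F)\cap V\neq\emptyset$ if and only if $F\cap(U_n\cap V)\neq\emptyset$, which is a generating Effros--Borel condition since $U_n\cap V$ is open, and $\Psi_n(F)\subset F$ still has diameter at most $\mathrm{diam}(U_n)$, so your density argument survives unchanged.

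The greedy construction of $S$ also gets stuck. At stage $j+1$ you require both $d(d_k,d_{n_j(F)})<2^{-(j+1)}$ and $B(d_k,2^{-(j+1)})\cap F\neq\emptyset$, and you justify existence by placing $d_k$ near a witness $w\in F\cap B(d_{n_j(F)},2^{-j})$; but such a $w$ may lie at distance close to $2^{-j}$ from $d_{n_j(F)}$, in which case no point can be simultaneously $2^{-(j+1)}$-close to $d_{n_j(F)}$ and $2^{-(j+1)}$-close to $w$. This really happens, e.g.\ in $\N^\N$ with its usual ultrametric and $F$ a singleton whose first admissible approximant $d_{n_j(F)}$ branches away from $F$ at coordinate $j+1$. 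The standard repair is to replace your two conditions by the single condition $F\cap B(d_k,2^{-(j+1)})\cap B(d_{n_j(F)},2^{-j})\neq\emptyset$: a witness always exists by density of $\{d_k\}$, the condition is a hitting condition for an open set and hence Borel, and the centers satisfy $d(d_{n_{j+1}(F)},d_{n_j(F)})<2^{-j}+2^{-(j+1)}$, which is still summable, so the sequence is Cauchy and the remainder of your argument (the limit lies in $F$ because $F$ is closed, and $S$ is Borel as a pointwise limit of Borel functions) is correct.
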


In these notes we will only work with separable Banach spaces. We denote the closed unit ball of a Banach space $X$ by $B_X$. It is well 
known that every separable Banach space is isometrically isomorphic to a closed linear subspace of $C(\Delta)$ (see \cite{K}, page $79$), where $\Delta$ denotes the Cantor set. Therefore, 
$C(\Delta)$ is called \emph{universal} for the class of separable Banach spaces and we can code the class of separable Banach spaces, denoting it by $\text{SB}$, by 
$\text{SB}=\{X\subset C(\Delta)\mid  X \text{ is a closed }\allowbreak\text{linear subspace of }\allowbreak C(\Delta)\}$. As $C(\Delta)$ is clearly a Polish space we can endow 
$\cF(C(\Delta))$ with the Effros-Borel structure. It can be shown that $\SB$ is a Borel set in $\cF(C(\Delta))$ and hence it is also a standard Borel space (see \cite{D}, Theorem $2.2$). It now makes
sense to wonder if specific classes of separable Banach spaces are Borel or not (in our coding $\SB$). 

Throughout these notes we will denote by $\{S_n\}_{n\in\N}$ the sequence of Borel functions $S_n:\SB\to C(\Delta)$ given by \text{Lemma \ref{lll}}, with $X=C(\Delta)$ (more precisely, the restriction of those functions to $\SB$).

Consider the standard Borel space $C(\Delta)^\N$, and let 

$$\cB=\{(x_j)_{j\in\N}\in C(\Delta)^\N\mid (x_j)_{j\in\N}\text{ is a basic sequence}\}.$$\\
\noindent It is easy to see that $\cB$ is a Borel set. Therefore, we have that $\cB$ is a standard Borel space, and we can code all the basic sequences as elements of $\cB$. We can now wonder about the descriptive complexity of some specific classes of basis.

Let $(e_n)_{n\in\N}$ be a basic sequence in a Banach space $X\in\SB$. By a standard Skolem hull construction, there is a countable subfield $\mathbb{F}$ of $\R$ containing the rationals such that for any finite linear combination

$$\lambda_0 e_0+...+\lambda_n e_n,$$\\
\noindent with $n\in\N$, and $\lambda_1,...,\lambda_n\in\mathbb{F}$, we have $\|\lambda_0 e_0+...+\lambda_n e_n\|\in\mathbb{F}$. By working with $\mathbb{F}$ instead of $\Q$, we guarantee that any $\mathbb{F}$-linear combination of $(e_n)_{n\in\N}$ can be normalized and remain a $\mathbb{F}$-linear combination of $(e_n)_{n\in\N}$. Clearly, the $\mathbb{F}$-span of $(e_n)_{n\in\N}$ is dense in $\overline{\text{span}}\{e_n\}$.

Let $D$ be the set of normalized blocks of $(e_n)_{n\in\N}$, i.e., the set of all $\lambda_0 e_0+...+\lambda_n e_n,$ with $n\in\N$, and $\lambda_1,...,\lambda_n\in\mathbb{F}$. Clearly, $D$ is countable. A \emph{block sequence} of $(e_n)_{n\in\N}$ is a sequence $(y_n)_{n\in\N}$ such that, (i) there exist increasing sequences of natural numbers $(p_n)_{n\in\N}$ and $(q_n)_{n\in\N}$ such that $p_i\leq q_i<p_{i+1}$, for all $i\in\N$, and (ii) for all $n\in\N$

$$y_n=\sum_{j=p_n}^{q_n}a_je_j,$$\\
\noindent for some sequence $(a_n)_{n\in\N}$ of real numbers. We define a  \emph{finite block sequence} analogously. Denote by $bb(e_n)$ the set of normalized block sequences with the coefficients $(a_n)_{n\in\N}$ in $\mathbb{F}$. Endowing $D$ with the discrete topology, we can see $bb(e_n)$ as a closed subset of $D^\N$, so $bb(e_n)$ is a standard Borel space. Denote by $fbb(e_n)$ the set of normalized finite block sequences with the coefficients $(a_n)_{n\in\N}$ in $\mathbb{F}$. Letting $D^{<\N}=\cup_nD^n$,  we can see $fbb(e_n)$ as a closed subset of $D^{<\N}$. Hence, $fbb(e_n)$ is a standard Borel space.

Let $X$ and $Y$ be Banach spaces. We write  $X\hookrightarrow Y$ if $X$ can be linearly embedded into $Y$. If $K>0$, we write $X\hookrightarrow_K Y$ if $X$ can be $K$-embedded in $Y$, i.e., if there exists an embedding $T:X\to Y$ such that $\|T\|\|T^{-1}\|\leq K$. If $(x_n)_{n\in\N}$ and $(y_n)_{n\in\N}$ are two sequences in Banach spaces, we write $(x_n)_{n\in\N}\approx (y_n)_{n\in\N}$ if $(x_n)_{n\in\N}$ is equivalent to $(y_n)_{n\in\N}$, i.e., if the map $x_n\mapsto y_n$ induces a linear isomorphism between $\overline{\text{span}}\{x_n\}$ and $\overline{\text{span}}\{y_n\}$. Similarly, if $K>0$, we write $(x_n)_{n\in\N}\approx_K (y_n)_{n\in\N}$ if the induced isomorphism if a $K$-isomorphism.

Denote by $\NN$ the set of all finite tuples of natural numbers plus the empty set. Given $s=(s_0,...,s_{n-1}),\allowbreak t=(t_0,...,t_{m-1})\in\NN$ we say that the length of $s$ is 
$|s|=n$, $s_{|i}=(s_0,...,s_{i-1})$, for all $i\in\{1,...,n\}$, $s_0=\{\emptyset\}$, $s\preceq t$ \emph{iff} $n\leqslant m$ and $s_i=t_i$, for all $i\in\{0,...,n-1\}$, i.e., if $t$ is an 
extension of $s$. We define $s<t$ analogously. Define the concatenation of $s$ and $t$ as $s^\smallfrown t=(s_0,...,s_{n-1},t_0,...,t_{m-1})$.  A subset $T$ of $\NN$ is called a 
\emph{tree} if $t\in T$ implies $t_{|i}\in T$, for all $i\in\{0,...,|t|\}$. We denote the set of trees on $\N$ by $\text{Tr}$. A subset $I$ of a tree $T$ is called a segment if $I$ is 
completely ordered and if $s,t\in I$ with $s\preceq t$, then $l\in I$, for all $l\in T$ such that $s\preceq l\preceq t$. Two segments $I_1,\ I_2$ are called completely incomparable if neither 
$s\preceq t$ nor $t\preceq s$ hold if $s\in I_1$ and $t\in I_2$.

As $\NN$ is countable, $2^\NN$ (the power set of $\NN$) is Polish with its standard product topology. If we think about $\text{Tr}$ as a subset of $2^\NN$,  it is easy to see that $\text{Tr}$ is a $G_\delta$ set in $2^\NN$. Thus, it is also 
 Borel in $2^\NN$. As $\text{Tr}$ is Borel in the Polish space $2^{\NN}$, we have that $\text{Tr}$ is a standard Borel space. A $\beta\in\N^\N$ is called a \emph{branch} of 
 a tree $T$ if $\beta_{|i}\in T$, for all $i\in\N$, where $\beta_{|i}$ is defined analogously as above. We call a tree $T$ \emph{well-founded} if $T$ has no branches and 
 \emph{ill-founded} otherwise, we denote the set of well-founded and ill-founded trees by $\WF$ and $\IF$, respectively. It is well known that $\WF$ is a complete coanalytic set of 
 $\text{Tr}$, hence $\IF$ is complete analytic (see \cite{K}, Theorem $27.1$).

\section{A Lemma.}\label{sectionlema}

In this section, we prove a basic lemma that will be essential in many of the main results of this paper. Fix a compatible enumeration of $\NN$, i.e., a sequence $(s_n)_{n\in\N}$ in $\NN$ such that $s_n\preceq s_m$ implies $n\leqslant m$ and for all $s\in\NN$ there exists $n\in\N$ such that $s_n=s$. With this enumeration in mind, if $\theta\in\Tr$, we say that a sequence $(x_s)_{s\in\theta}$ is a basis for a given Banach space $X$, if $(x_{s_n})_{n\in N}$ is a basis for $X$, where $N=\{n\in\N\mid s_n\in \theta\}$. We now show the following.

\begin{lemma}\label{lemageral}
Let $\theta\in\WF$, and let $X$ be a Banach space with an unconditional basis $(e_s)_{s\in\theta}$. Let $Y$ be an infinite dimensional subspace of $X$. Then $Y$ contains a basic sequence $(y_k)_{k\in\N}$ equivalent to a semi-normalized block sequence $(x_k)_{k\in\N}$ of $(e_s)_{s\in\theta}$ with completely incomparable supports.
\end{lemma}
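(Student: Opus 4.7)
My plan is to reduce to a block sequence by a gliding hump argument and then use Ramsey's theorem combined with well-foundedness of $\theta$ to extract a subsequence with pairwise completely incomparable supports.

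First I would apply the Bessaga--Pelczynski gliding hump argument to the infinite dimensional subspace $Y$: using the basis $(e_{s_n})_{n\in N}$ of $X$, where $N=\{n\in\N:s_n\in\theta\}$, this produces a basic sequence $(y_k)\subset Y$ together with a seminormalized block sequence $(u_k)$ of $(e_{s_n})_{n\in N}$ such that $\|y_k-u_k\|$ is summable and small enough that the principle of small perturbations yields $(y_k)\approx(u_k)$. Let $F_k=\mathrm{supp}(u_k)\subset\theta$; the $F_k$ are finite, pairwise disjoint, and satisfy $\max\{n:s_n\in F_k\}<\min\{n:s_n\in F_{k+1}\}$. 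By pigeonhole I pass to a subsequence on which $|F_k|=m$ is constant and list $F_k=\{t_1^k,\ldots,t_m^k\}$ in increasing order of enumeration index.

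Next I would apply Ramsey's theorem for pairs. If $k<l$ then for any $i,j\in\{1,\ldots,m\}$ the enumeration index of $t_i^k$ is strictly less than that of $t_j^l$; the compatibility property $s_n\preceq s_p\Rightarrow n\le p$ then forbids $t_j^l\preceq t_i^k$, while disjointness of supports forbids $t_i^k=t_j^l$. Hence the relation between $t_i^k$ and $t_j^l$ in $\theta$ is either $t_i^k\prec t_j^l$ or incomparable. I color the unordered pair $\{k,l\}$ with $k<l$ by the full comparability pattern $\phi_{k,l}\colon\{1,\ldots,m\}^2\to\{\prec,\bot\}$. There are finitely many colors, so Ramsey's theorem yields an infinite $K\subset\N$ on which $\phi_{k,l}$ equals a fixed pattern $\phi$.

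It remains to show $\phi\equiv\bot$. Suppose $\phi(i_0,j_0)=\prec$ for some $(i_0,j_0)$. For each $l\in K$ the set $\{t_{i_0}^k:k\in K,\ k<l\}$ consists of ancestors of $t_{j_0}^l$, which form a totally ordered finite chain in $\theta$; hence this set is totally ordered, and taking the union over $l\in K$ shows that $S=\{t_{i_0}^k:k\in K\}$ is totally ordered. Since the $F_k$ are pairwise disjoint, $S$ is infinite, so arranging its elements by length gives a strictly increasing chain $c_1\prec c_2\prec\cdots$ in $\theta$; by closure of $\theta$ under initial segments this chain has a common infinite extension in $\N^\N$ which is a branch of $\theta$, contradicting $\theta\in\WF$. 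Hence $\phi\equiv\bot$, and $(u_k)_{k\in K}$ together with $(y_k)_{k\in K}$ are the required sequences. The main obstacle is identifying the right Ramsey coloring: a coarser bit-valued ``bad vs.\ good'' coloring would leave ambiguity about which elements of $F_k$ mediate comparability and the chain extraction would fail, whereas recording the full comparability pattern tracks a fixed ancestor $t_{i_0}^k$ through all $k$ so that well-foundedness of $\theta$ delivers the contradiction.
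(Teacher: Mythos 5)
There is a genuine gap at the step ``by pigeonhole I pass to a subsequence on which $|F_k|=m$ is constant.'' The cardinalities $|F_k|$ need not take any value infinitely often (they may well satisfy $|F_k|\to\infty$), and without a uniform bound on $|F_k|$ your ``full comparability pattern'' coloring has infinitely many colors, so Ramsey's theorem for pairs does not apply. Moreover this is not a removable technicality: the whole strategy of extracting a completely incomparable \emph{subsequence} of the block sequence handed to you by the gliding hump fails once the supports are unbounded in size. For a concrete obstruction, let $\theta$ be the downward closure of the sets $F_l=\{(a_l)\}\cup\{(a_j,b_{j,l})\mid 1\le j<l\}$, with the integers $a_l$, $b_{j,l}$ chosen inductively so large that the $F_l$ are successive with respect to the fixed compatible enumeration; this $\theta$ has height two, hence is well-founded. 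For every $k<l$ one has $(a_k)\in F_k$, $(a_k,b_{k,l})\in F_l$ and $(a_k)\prec(a_k,b_{k,l})$, so \emph{no two} of these supports are completely incomparable and no subsequence (indeed no further blocking) of a block sequence supported on the $F_l$ can have the required property. Your own observation that the coarser two-valued coloring is insufficient is correct, and the same example shows its ``linked'' homogeneous case does not yield a branch; but the fine coloring cannot be made finite. Your Ramsey argument is valid exactly when the $|F_k|$ are uniformly bounded, and that hypothesis cannot be arranged.

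The paper avoids this issue by choosing \emph{which} block vectors to build rather than refining an arbitrary block sequence. Using well-foundedness (via an argument about strictly singular projections, not Ramsey), it locates a node $s\in\theta$ such that $P_s$, the projection onto $\overline{\text{span}}\{e_\tau\mid s\preceq\tau\}$, is not strictly singular on $Y$, while the projections $Q_{s,n}$ onto any finitely many of the subtrees rooted at the children of $s$ are all strictly singular on $Y$. A gliding hump performed along successive bands of these sibling subtrees then produces vectors $x_k=Q_{s,l_k}(y_{n_k})-Q_{s,l_{k-1}}(y_{n_k})$ whose supports lie in disjoint families of pairwise incomparable subtrees, so complete incomparability holds by construction. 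If you want to keep your outline, you must import that idea; the Bessaga--Pe{\l}czy\'nski step alone does not put you in a position where a combinatorial extraction can succeed.
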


Before we prove this lemma, let's show a simple lemma that will be important in our proof. We say that an operator $T:X\to Y$ is \emph{strictly singular} if for all infinite dimensional subspace $Z\subset X$, $T_{|Z}:Z\to Y$ is not an embedding.

\begin{lemma}\label{u8u}
Let $(X_1,\|\cdot\|_1),...,(X_n,\|\cdot\|_n)$ be Banach spaces, and let $Y\subset \oplus_{i=1}^n X_i$ be an infinite dimensional subspace. Consider the standard  projections $P_j:\oplus_{i=1}^nX_i\to X_j$, for all $j\in\{1,...,n\}$. Then, there exists $j\in\{1,...,n\}$ such that $P_j:Y\to X_j$ is not strictly singular.  
\end{lemma}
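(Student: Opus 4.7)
The plan is to argue by contradiction using the following standard characterization of strictly singular operators, which I will cite (or briefly recall): an operator $T\colon X\to Z$ is strictly singular if and only if for every $\varepsilon>0$ and every infinite-dimensional subspace $X_0\subset X$, there exists an infinite-dimensional subspace $X_1\subset X_0$ with $\|T|_{X_1}\|<\varepsilon$. This is the correct tool because the hypothesis only rules out exact isomorphisms on subspaces, whereas we will need to make the projections simultaneously small on a common subspace.

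First I would make a preliminary observation about the norm on the finite direct sum. Since projections onto each coordinate are continuous and the natural inclusions $X_j\hookrightarrow\oplus_{i=1}^n X_i$ are continuous, there exists a constant $C>0$ such that
$$\|(x_1,\dots,x_n)\|\leq C\sum_{i=1}^n\|x_i\|_i$$
for every $(x_1,\dots,x_n)\in\oplus_{i=1}^n X_i$. (In practice one may just assume $\oplus$ denotes an $\ell_p$-sum, but the general statement needs this remark.) Fix $\varepsilon=\tfrac{1}{2Cn}$.

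The main step is an iterative construction. Assume for contradiction that $P_j|_Y$ is strictly singular for every $j\in\{1,\dots,n\}$. I would build a nested sequence of infinite-dimensional subspaces
$$Y=Y_0\supset Y_1\supset\cdots\supset Y_n$$
such that $\|P_j|_{Y_j}\|<\varepsilon$ for each $j$, as follows. Having constructed $Y_{j-1}$, note that since $P_j|_Y$ is strictly singular, its restriction $P_j|_{Y_{j-1}}$ to the infinite-dimensional subspace $Y_{j-1}\subset Y$ is also strictly singular. The characterization then provides an infinite-dimensional $Y_j\subset Y_{j-1}$ on which $\|P_j\|<\varepsilon$. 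Crucially, the nesting $Y_n\subset\cdots\subset Y_j$ preserves the bounds, so $\|P_i|_{Y_n}\|<\varepsilon$ for every $i\in\{1,\dots,n\}$. Picking any norm-one $y\in Y_n$ we obtain
$$1=\|y\|\leq C\sum_{j=1}^n\|P_j y\|_j<Cn\varepsilon=\tfrac{1}{2},$$
which is the desired contradiction.

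The only step that is not entirely routine is invoking the characterization of strictly singular operators in the form above; I expect this to be the main subtlety to flag in the write-up, since the naive definition (no infinite-dimensional subspace on which $T$ is an isomorphism) is only a pointwise statement, while the argument requires simultaneous smallness on a common subspace. Everything else (the direct-sum norm estimate and the finite iteration) is entirely mechanical.
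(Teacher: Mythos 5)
Your proof is correct and follows essentially the same route as the paper: the paper assumes towards a contradiction that all $P_j|_Y$ are strictly singular, cites a standard property of strictly singular operators (Dodos, Proposition B.5) to get a single infinite-dimensional subspace on which all $n$ projections have norm $<\eps$, and derives the same contradiction from $x=(P_1(x),\dots,P_n(x))$ with $\eps<1/n$. The only difference is that you unfold that cited property via the standard nested-subspace iteration (and keep a general constant $C$ where the paper normalizes to the $\ell_1$-sum), which is a presentational rather than mathematical difference.
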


\begin{proof}
Let $X=\oplus_{i=1}^nX_i$.  As this is a finite sum, we can assume $X=(\oplus_{j=1}^nX_j)_{\ell_1}$, i.e., if $(x_1,...,x_n)\in X$, then $\|x\|_X=\sum_j\|x_j\|_{j}$.  Assume towards a contradiction that $P_j$ is strictly singular, for all $j\in\{1,...,n\}$. By a classic property of strictly singular operators (see \cite{D}, Proposition B.5), we know that for all $\eps>0$ there exists an infinite dimensional subspace $A\subset Y$ such that $\|P_{j|A}\|<\eps$, for all $j\in\{1,...,n\}$. Pick $x\in A$ of norm one. Then, as  $x=(P_1(x),...,P_n(x))$, we have $\|x\|_X\leqslant n\eps$. By choosing $\eps<1/n$ we get a contradiction. 
\end{proof}

\noindent\emph{Proof of Lemma \ref{lemageral}.}
For each $s\in\theta$, let $\Lambda_s=\{\lambda\in\N\mid s^\smallfrown(\lambda)\in \theta\}$, and enumerate each $\Lambda_s$, say $\Lambda_s=\{\lambda^i_s\mid i\in\N\}$. For each $s\in\theta$, let $\theta_{s}=\{\tau\in\theta\mid 
s\preceq\tau\}$. 

For each $s\in\theta$, let $P_s:X\to X$ be the projection of $X$ onto

$$\overline{\text{span}} \{e_\tau\mid s\preceq \tau\}.$$\\
\noindent  For each $s\in\theta$, and each $n\in\N$, consider the projections

\begin{align*}
Q_{s,n}:\ \ \ X\ \ &\to \oplus_{i=1}^nP_{s^\smallfrown{(\lambda^i_s)}}(X)\\
(a_\tau)_{\tau \in \theta}&\to (a_\tau)_{\tau\in\cup_{i=1}^n \theta_{s^\smallfrown{(\lambda^i_s)} } }.
\end{align*}\\
\indent\textbf{Claim:} There exists $s\in\theta$ such that  $P_s:Y\to X$ is not strictly singular, but $Q_{s,n}:Y\to \oplus_{i=1}^nP_{s^\smallfrown{(\lambda^i_s)}}(X)$ is strictly singular, for all $n\in\N$.\\

Let's assume the claim is true and finish the proof of the lemma. Indeed, if $P_s:Y\to X$ is not strictly singular, we can substitute $Y$ by an infinite dimensional $Z\subset Y$ such that $P_s:Z\to X$ is an isomorphism onto its image. Let $E=P_s(Z)$, and notice that 

$$E\subset \overline{\text{span}} \{e_\tau\mid s\preceq \tau\}.$$\\
\noindent By Lemma \ref{u8u}, we can actually assume that $E\subset \overline{\text{span}} \{e_\tau\mid s\prec \tau\}$. Hence, for all $x\in E$, we have that $x=\lim_n Q_{s,n}(x)$.\\

\textbf{Claim:} There exists a normalized sequence $(y_j)_{j\in\N}$ in $E$ such that $Q_{s,n}(y_j)\to 0$, as $j\to \infty$, $\forall n\in\N$.\\

Indeed, for all $n\in\N$, there exists a normalized sequence $(y^n_j)_{j\in\N}$ in $E$ such that  

$$\|Q_{s,n}(y^n_j)\|<1/j , \text{ \ for all\ } j\in\N.$$\\
\indent Let $(y_j)_{j\in\N}$ be the diagonal sequence of the sequences $(y^n_j)_{j\in\N}$, i.e., $y_j=y^j_j$, for all $j\in\N$. Say $M$ is the unconditional constant of $(e_s)_{s\in\theta}$. Then,  $m\leqslant n$ 
implies $\|Q_{s,m}(x)\|\leqslant M \|Q_{s,n}(x)\|$, for all $x\in E$. Hence, $(y_j)_{j\in\N}$ has the required property.

Say $(\eps_i)_{i\in\N}$ is a sequence of positive real numbers converging to zero. As $Q_{s,n}(x)\to x$, as $n\to\N$, for all 
$x\in E$, we can pick increasing sequences of natural numbers $(n_k)_{k\in\N}$,  and $(l_k)_{k\in\N}$ such that

\begin{enumerate}[(i)]
\item  $\|Q_{s,{l_k}}(y_{n_k})-y_{n_k}\|_\theta<\eps_k$, for all $k\in\N$, and
\item $\|Q_{s,{l_k}}(y_{n_{k+1}})\|_\theta<\eps_k$, for all $k\in\N$.
\end{enumerate}

For each $k\in \N$, let 

$$x_k=Q_{s,{l_k}}(y_{n_k})-Q_{s,{l_{k-1}}}(y_{n_k}).$$ \\
\noindent Choosing $(\eps_k)_{k\in\N}$ converging to zero fast enough, we have that $(x_k)_{k\in\N}$ is semi-normalized and, by the principle of small perturbations,  that $(y_{n_k})_{k\in\N}$ is equivalent to $(x_{k})_{k\in\N}$ (see \cite{AK}, \text{Theorem 1.3.9}). Clearly, $(x_k)_{k\in\N}$  has completely incomparable supports. As $Y$ contains a sequence equivalent to $(x_k)_{k\in\N}$, the proof is complete.

We now prove our first claim. Suppose the claim is false, i.e., suppose that for all $s\in\theta$ such that $P_s:Y\to X$ is not strictly singular, there exists $n\in\N$ such that $Q_{s,n}:Y\to \oplus_{i=1}^nP_{s^\smallfrown{(\lambda^i_s)}}(X)$  is not strictly singular. By Lemma \ref{u8u}, if $Q_{s,n}:Y\to \oplus_{i=1}^nP_{s^\smallfrown{(\lambda^i_s)}}(X)$  is not strictly singular, there exists $m\leqslant n$ such that 

$$P_{s^\smallfrown \lambda^m_s}=P_{s^\smallfrown \lambda^m_s}\circ Q_{s,n}:Y\to X$$\\
\noindent  is not strictly singular. Therefore, for all  $s\in \theta$  such that $P_s:Y\to X$ is not strictly singular, there exists $s'\succ s$ such that $P_{s'}:Y\to X$ is not strictly singular. 

Now notice that $P_\emptyset:Y\to X$ is not strictly singular, indeed, $P_\emptyset=Id$. Therefore, by applying the last paragraph $\omega$ times, we get a sequence $(s_n)_{n\in\N}$ such that $P_{s_n}:Y\to X$ is not strictly singular, and $s_n\prec s_{n+1}$, for all $n\in\N$. In particular, $s_n\in\theta$, for all $n$, absurd, as  $\theta$ is well-founded.\qed

\section{$\ell_p$-Baire sums.}\label{sectionlp}

We now deal with $\ell_p$-Baire sums of basic sequences, this tool will be crucial in many of our results in these notes. Fix a basic sequence $\cE=(e_n)_{n\in\N}$, and $p\in[1,\infty)$. Let us define a Borel function $\varphi:\text{Tr}\to\SB$ in the following manner. For each $\theta\in\text{Tr}$ and $x=(x(s))_{s\in\theta}\in c_{00}(\theta)$ we define

\begin{align*}
\left\|x\right\|_{\cE,p,\theta}=\sup\Big\{\Big(\sum_{i=1}^{n}\big\|\sum_{s\in I_i} x(s)e_{|s|}\big\|^p_{\cE}\Big)^{\frac{1}{p}}| \ n\in\N,\ I_1, ...,\ I_n &\text{ incomparable}\\ &\ \ \ \text{segments of 
}\theta\Big\},
\end{align*}\\
\noindent where $\|.\|_{\cE}$ is the norm of $\overline{\text{span}}\{\cE\}$. Define $\varphi_{\cE,p}(\theta)$ as the completion of $c_{00}(\theta)$ under the norm $\left\|.\right\|_{\cE,p,\theta}$.  The space $\varphi_{\cE,p}(\theta)$ is known as the \emph{$\ell_p$-Baire sum} of $\overline{\text{span}}\{\cE\}$ (indexed by $\theta$).

Similarly, we define $\|.\|_{\mathcal{E},0,\theta}$ as

$$\left\|x\right\|_{\mathcal{E},0,\theta}=\sup\Big\{\big\|\sum_{s\in I} x(s)e_{|s|}\big\|_{\cE}\mid  I \text{ segment of }\theta\Big\},$$\\
\noindent and let $\varphi_{\mathcal{E},0}(\theta)$  be the completion of $(c_{00}(\theta),\|.\|_{\mathcal{E},0,\theta})$. We denote by $(e_s)_{s\in\theta}$ the sequence in $c_{00}(\theta)$ such that, for each $\tau\in\theta$, the coordinate $e_s(\tau)$ equals $1$ if $s=\tau$ and zero otherwise. Considering a compatible enumeration of $\N^{<\N}$, as in Section \ref{sectionlema}, the sequence $(e_s)_{s\in\theta}$ is clearly a basis for $\varphi(\theta)$.

 Pick $Y\subset C(\Delta)$ such that $\varphi_{\mathcal{E},p}(\NN)$ is isometrically isomorphic to $Y$. If we consider the natural isometries of $\varphi_{\mathcal{E},p}(\theta)$ into $\varphi_{\cE,p}(\NN)$, we can see $\varphi_{\cE,p}$ as a Borel function from $\Tr$ into $\SB$. This gives us the following (see \cite{S}, Proposition 3.1, page 79).

\begin{prop}
Let $p\in[1,\infty)$ or $p=0$. Then, the  map $\varphi_{\mathcal{E},p}:\Tr\to\SB$ defined above is Borel. 
\end{prop}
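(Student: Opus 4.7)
The plan is to test Borel measurability of $\varphi_{\cE,p}$ against a generating family for the Effros--Borel structure on $\cF(C(\Delta))$. That structure is generated by the sets $\{F : F \cap U \neq \emptyset\}$ with $U$ open, so it suffices to produce countably many Borel selectors $d_k : \Tr \to C(\Delta)$ such that $\{d_k(\theta) : k \in \N\}$ is dense in $\varphi_{\cE,p}(\theta)$ for every $\theta \in \Tr$, since then
$$\{\theta \in \Tr : \varphi_{\cE,p}(\theta) \cap U \neq \emptyset\} = \bigcup_{k \in \N} \{\theta : d_k(\theta) \in U\}$$
becomes a countable union of Borel sets.

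Before producing the selectors I would check the natural fact that for every $\theta \in \Tr$ the identity on $c_{00}(\theta)$ extends to an isometric embedding $\varphi_{\cE,p}(\theta) \hookrightarrow \varphi_{\cE,p}(\NN)$. The inequality $\|x\|_{\cE,p,\theta} \leq \|x\|_{\cE,p,\NN}$ is immediate because incomparable segments of $\theta$ are in particular incomparable segments of $\NN$. For the other direction, given $x \in c_{00}(\theta)$ and incomparable segments $I_1, \ldots, I_n$ of $\NN$, one checks straight from the definitions that each non-empty $I_i \cap \theta$ is a segment of $\theta$, that the family $(I_i \cap \theta)$ remains completely incomparable, and that $\sum_{s \in I_i} x(s) e_{|s|}$ is unchanged when $I_i$ is replaced by $I_i \cap \theta$ since $x$ vanishes off $\theta$. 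Fixing once and for all the ambient isometry $\varphi_{\cE,p}(\NN) \cong Y \subset C(\Delta)$ and letting $y_s \in Y$ denote the image of $e_s$, this identifies $\varphi_{\cE,p}(\theta)$ with $\overline{\text{span}}\{y_s : s \in \theta\}$ inside the fixed space $Y$.

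For each finite $F \subset \NN$ and each $q : F \to \Q$ define
$$d_{F,q}(\theta) = \sum_{s \in F \cap \theta} q(s)\, y_s \in C(\Delta),$$
and enumerate the countably many such pairs as $(d_k)_{k \in \N}$. Because $\{\theta : s \in \theta\}$ is clopen in $2^{\NN}$ for every $s \in \NN$, each $d_k$ is locally constant on $\Tr$, hence Borel. Density holds because $\Q$-linear combinations of $\{e_s : s \in \theta\}$ are dense in $\varphi_{\cE,p}(\theta)$ and every such combination has the form $d_{F,q}(\theta)$ once $F$ is chosen to contain its support and to sit inside $\theta$ (so that intersecting with $\theta$ does nothing). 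The $p = 0$ case is handled by the identical argument, with a single segment replacing the finite family of incomparable segments throughout. The one mild subtlety, transparently handled by the restriction $s \in \theta$ in the summation, is that the nodes of $F$ happening to lie outside $\theta$ must not contribute to $d_{F,q}(\theta)$; this keeps each selector's output inside $\varphi_{\cE,p}(\theta)$ and is itself a Borel operation.
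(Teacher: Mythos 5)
Your proof is correct and follows the same route the paper takes, namely identifying each $\varphi_{\cE,p}(\theta)$ isometrically with $\overline{\text{span}}\{y_s : s\in\theta\}$ inside a fixed copy $Y$ of $\varphi_{\cE,p}(\NN)$ in $C(\Delta)$; the paper simply delegates the remaining verification to the cited reference, whereas you carry it out explicitly via the standard dense-sequence-of-Borel-selectors criterion for Effros--Borel measurability. Your check that segments of $\theta$ are segments of $\NN$ and that intersecting segments of $\NN$ with $\theta$ loses nothing for $x\in c_{00}(\theta)$ is exactly the point needed to make the identification isometric, so nothing is missing.
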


The following lemma summarizes the main properties of the $\ell_p$-Baire sum that we will need later in these notes.

\begin{lemma}\label{arroto}
Let $\mathcal{E}$ be a basic sequence. The Borel function $\varphi_{\mathcal{E},p}:\Tr\to\SB$ defined above has the following properties:

\begin{enumerate}[(i)]
\item If $\theta\in\IF$, then $\varphi_{\mathcal{E},p}(\theta)$ contains $\overline{\text{span}}\{\mathcal{E}\}$.
\item If $\theta\in\WF$, then $\varphi_{\mathcal{E},p}(\theta)$ is $\ell_p$-saturated, i.e., every infinite dimensional subspace of $\varphi_{\mathcal{E},p}(\theta)$ contains an isomorphic copy of $\ell_p$.
\end{enumerate}

\noindent The analogous is true for $\varphi_{\mathcal{E},0}:\Tr\to\SB$, i.e., 

\begin{enumerate}[(i)]
\item If $\theta\in\IF$, then $\varphi_{\mathcal{E},0}(\theta)$ contains $\overline{\text{span}}\{\mathcal{E}\}$.
\item If $\theta\in\WF$, then $\varphi_{\mathcal{E},0}(\theta)$ is $c_0$-saturated, i.e., every infinite dimensional subspace of $\varphi_{\mathcal{E},0}(\theta)$ contains an isomorphic copy of $c_0$.
\end{enumerate}

\end{lemma}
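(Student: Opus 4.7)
The plan is to treat parts (i) and (ii) separately, with the $p=0$ case paralleling $p \in [1,\infty)$ throughout. For part (i), given $\theta \in \IF$, I would fix a branch $\beta$ of $\theta$ and show that the closed linear span of $\{e_{\beta_{|i}} : i \in \N\}$ inside $\varphi_{\cE,p}(\theta)$ is isomorphic to $\overline{\text{span}}\{\cE\}$. The key combinatorial point is that any two elements of a branch are $\preceq$-comparable, so among pairwise completely incomparable segments $I_1, \dots, I_n$ of $\theta$ at most one can meet the branch, and it does so in an interval $\{\beta_{|a}, \dots, \beta_{|b}\}$. Consequently, for $x$ supported on the branch, $\|x\|_{\cE,p,\theta}$ collapses to $\sup_{a \le b} \bigl\|\sum_{i=a}^b x(\beta_{|i}) e_i\bigr\|_\cE$, which is equivalent to $\bigl\|\sum_i x(\beta_{|i}) e_i\bigr\|_\cE$ up to a factor depending only on the basis constant of $\cE$; the same collapse applies verbatim to $\varphi_{\cE,0}(\theta)$.

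For part (ii), my strategy is to combine Lemma \ref{lemageral} with the fact that inside $\varphi_{\cE,p}(\theta)$ a semi-normalized block sequence with completely incomparable supports is automatically equivalent to the unit vector basis of $\ell_p$ (resp.\ $c_0$ when $p = 0$). Concretely, given an infinite-dimensional $Y \subset \varphi_{\cE,p}(\theta)$, I would apply Lemma \ref{lemageral} to the canonical unconditional basis $(e_s)_{s \in \theta}$ of $\varphi_{\cE,p}(\theta)$ to obtain inside $Y$ a basic sequence equivalent to such a block sequence $(x_k)_{k \in \N}$.

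The equivalence of $(x_k)$ with the $\ell_p$-basis then splits into two routine estimates that both exploit the incomparability of the supports. For the upper bound, any segment of $\theta$ is totally ordered and therefore meets at most one $\operatorname{supp}(x_k)$, since distinct, pairwise incomparable supports cannot both contribute $\preceq$-comparable elements; partitioning an incomparable family $(I_j)_j$ witnessing the Baire norm according to which support each $I_j$ meets yields $\sum_j \bigl\|\sum_{s \in I_j} \sum_k a_k x_k(s) e_{|s|}\bigr\|_\cE^p \leq \sum_k |a_k|^p \|x_k\|_{\cE,p,\theta}^p$. For the lower bound, for each $k$ and each $\eps > 0$ I would select a near-optimal incomparable family $J^k_1, \dots, J^k_{l_k} \subseteq \operatorname{supp}(x_k)$ witnessing $\|x_k\|_{\cE,p,\theta}^p$; because the supports are completely incomparable across different $k$, the union $\{J^k_l\}_{k,l}$ remains pairwise incomparable and can be plugged into the definition of $\|\sum_k a_k x_k\|_{\cE,p,\theta}$, producing the matching lower bound $(1-\eps) \sum_k |a_k|^p \|x_k\|^p$. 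Semi-normalization of $(x_k)$ then gives equivalence with the $\ell_p$-basis; in the $c_0$ case a single segment meets at most one support, so one obtains the sharper identity $\|\sum_k a_k x_k\|_{\cE,0,\theta} = \sup_k |a_k|\,\|x_k\|_{\cE,0,\theta}$ directly.

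I expect the main obstacle to be the production of the block sequence with completely incomparable supports, which is where well-foundedness of $\theta$ enters essentially, via Lemma \ref{lemageral} applied to the unconditional basis $(e_s)_{s \in \theta}$; once such a block sequence is in hand, the $\ell_p$ (or $c_0$) equivalence is a purely combinatorial consequence of how incomparable segments of $\theta$ interact with pairwise incomparable supports and requires no further information about $\cE$ or $\theta$.
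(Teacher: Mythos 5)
Your proposal is correct and follows the same route as the paper: part (i) by restricting to a branch (where incomparable segments collapse to a single interval, so the Baire norm reduces to the norm of $\overline{\text{span}}\{\cE\}$ up to the basis constant), and part (ii) by invoking Lemma \ref{lemageral} and then checking that a semi-normalized block sequence with completely incomparable supports is equivalent to the $\ell_p$- (resp.\ $c_0$-) basis, a step the paper dismisses as trivial and you work out explicitly. The only cosmetic point is that in the lower bound the intersections $J^k_l \cap \operatorname{supp}(x_k)$ need not literally be segments; one should replace each witnessing segment by the sub-segment spanned by its first and last support point, which stays inside the original segment and preserves pairwise incomparability.
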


\begin{proof} If $\theta\in\IF$, clearly $\varphi_{\mathcal{E},p}(\theta)$ contains $\overline{\text{span}}\{\mathcal{E}\}$.  Indeed, let $\beta$ be a branch of $\theta$, then $\overline{\text{span}}\{\mathcal{E}\}\cong\varphi_{\mathcal{E},p}(\beta)\emb\varphi_{\mathcal{E},p}(\theta)$, where by $\varphi_{\mathcal{E},p}(\beta)$ we mean $\varphi_{\mathcal{E},p}$ applied to the tree $\{s\in\NN\mid s<\beta\}$.

Say $\theta\in\WF$, and let $E$ be an infinite dimensional subspace of  $\varphi_{\mathcal{E},p}(\theta)$. By Lemma \ref{lemageral}, $E$ has a basic sequence equivalent  to a semi-normalized block sequence $(x_k)_{k\in\N}$ with completely incomparable supports. It is trivial to check that a semi-normalized block sequence with completely incomparable supports is equivalent to the $\ell_p$-basis (resp. $c_0$-basis). So we are done.
\end{proof}

Let $\mathcal{P}\subset \SB$. We say that $\mathcal{P}$ is a \emph{class} of Banach spaces if $\cP$ is closed under isomorphism, i.e.,  for all $X,Y\in\SB$, $X\in\mathcal{P}$ and $Y\cong X$ imply $Y\in \mathcal{P}$. We say that a class of Banach spaces $\mathcal{P}\subset \SB$ is \emph{pure} if $X\in\mathcal{P}$ implies $Y\in \mathcal{P}$, for all subspace $Y\subset X$. We say a class $\mathcal{P}\subset\SB$ is \emph{almost-pure} if for all $X\in \mathcal{P}$ and all infinite dimensional $Y\subset X$ there exists an infinite dimensional subspace $Z\subset Y$ such that $Z\in\mathcal{P}$. 

\begin{thm}\label{lema}
Say $\mathcal{P}\subset \SB$ is almost-pure and that $\ell_p$ (reps. $c_0$) does not embed in any $Y\in \mathcal{P}$, for some $p\in[1,\infty)$. Then $\text{C}_\mathcal{P}=\{Y\in\SB\mid \exists Z\in \cP,\ Z\emb Y\}$ is $\Sigma^1_1$-hard. In particular, the same is true if $\mathcal{P}$ is pure and does not contain $\ell_p$ (reps. $c_0$), for some $p\in[1,\infty)$. 
\end{thm}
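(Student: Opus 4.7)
The plan is to show that $\text{IF}$ Borel-reduces to $\text{C}_\mathcal{P}$, which immediately gives $\Sigma^1_1$-hardness since $\text{IF}$ is $\Sigma^1_1$-complete. Since any non-trivial application requires $\mathcal{P}\ne\emptyset$, I fix some $Z\in\mathcal{P}$ and, using Mazur's theorem, a basic sequence $\cE=(e_n)_{n\in\N}$ sitting inside $Z$. I then consider the Borel map $\varphi_{\cE,p}:\Tr\to\SB$ built in the previous proposition (using $\varphi_{\cE,0}$ in the $c_0$ case) and claim that $\varphi_{\cE,p}^{-1}(\text{C}_\mathcal{P})=\IF$.

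For the forward implication, let $\theta\in\IF$. By Lemma \ref{arroto}(i), the Baire sum $\varphi_{\cE,p}(\theta)$ contains an isomorphic copy of $X:=\overline{\text{span}}\{\cE\}$, which is an infinite-dimensional subspace of $Z\in\mathcal{P}$. Here is where almost-purity enters: applied to $Z$ and the subspace $X\subset Z$, it produces an infinite-dimensional $W\subset X$ with $W\in\mathcal{P}$. Since $W\emb X\emb\varphi_{\cE,p}(\theta)$, we get $\varphi_{\cE,p}(\theta)\in\text{C}_\mathcal{P}$.

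For the converse, let $\theta\in\WF$ and suppose toward contradiction that $\varphi_{\cE,p}(\theta)\in\text{C}_\mathcal{P}$, so some $Z'\in\mathcal{P}$ embeds into $\varphi_{\cE,p}(\theta)$. By Lemma \ref{arroto}(ii), $\varphi_{\cE,p}(\theta)$ is $\ell_p$-saturated (resp. $c_0$-saturated), so the isomorphic copy of $Z'$ inside it contains $\ell_p$ (resp. $c_0$), and pulling back gives $\ell_p\emb Z'$, contradicting the standing hypothesis. Combining the two implications, $\varphi_{\cE,p}$ is a Borel reduction from $\IF$ to $\text{C}_\mathcal{P}$, and the $\Sigma^1_1$-hardness follows. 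The ``in particular'' clause is then routine: purity trivially implies almost-purity, and if $\cP$ is pure and $\ell_p\emb Y$ for some $Y\in\mathcal{P}$, then the copy of $\ell_p$ inside $Y$ would itself belong to $\mathcal{P}$, so $\ell_p\notin\mathcal{P}$ already forces the stronger embedding hypothesis.

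The only real friction point I anticipate is organizing the ``$\theta\in\IF$'' direction cleanly, because almost-purity is genuinely weaker than purity: one cannot simply assert $\overline{\text{span}}\{\cE\}\in\mathcal{P}$, and one must explicitly pass to a further subspace $W$ before invoking the definition of $\text{C}_\mathcal{P}$. Apart from that, the argument is a direct plug-and-play use of Lemma \ref{arroto} together with the fact that $\IF$ is $\Sigma^1_1$-complete in $\Tr$.
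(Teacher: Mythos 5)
Your argument is correct and follows the same overall strategy as the paper: Borel-reduce $\IF$ to $\text{C}_\cP$ via the $\ell_p$-Baire sum map $\varphi_{\cE,p}$ and Lemma \ref{arroto}, using saturation for the well-founded direction. The one substantive difference is the choice of $\cE$. You take $\cE$ to be a basic sequence inside a fixed $Z\in\cP$ (via Mazur), which forces you to invoke almost-purity in the ill-founded direction to pass from the subspace $\overline{\text{span}}\{\cE\}\subset Z$ to a further subspace actually belonging to $\cP$ --- exactly the ``friction point'' you flag. The paper instead takes $\cE$ to be a basis of $C(\Delta)$ itself, so that for $\theta\in\IF$ the space $\varphi_{\cE,p}(\theta)$ contains an isometric copy of the universal space $C(\Delta)$ and hence of every member of $\cP$; with that choice the almost-purity hypothesis is not needed at all in the reduction (only $\cP\neq\emptyset$ and the non-embedding of $\ell_p$), and it enters only via the ``in particular'' clause. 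So your proof is valid under the stated hypotheses, while the paper's choice yields the marginally stronger statement in which almost-purity can be dropped. Two small housekeeping points: the paper restricts $\varphi_{\cE,p}$ to the Borel set of infinite trees so that the well-founded direction is a clean application of $\ell_p$-saturation (for finite $\theta$ the space $\varphi_{\cE,p}(\theta)$ is finite dimensional and saturation is vacuous); and both arguments tacitly assume the members of $\cP$ are infinite dimensional, since otherwise $\text{C}_\cP$ is trivially Borel and a finite-dimensional $Z'$ would escape your saturation argument. Neither point affects the correctness of what you wrote.
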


\begin{proof}
This is a simple application of Lemma \ref{arroto}. Indeed, let $\cE$ be a basis for $C(\Delta)$, and consider the restriction of $\varphi_{\cE,p}$ to the set of infinite trees, say $\text{ITr}$. It is easy to see that $\text{ITr}$ is Borel (see \cite{S}, Proposition 1.6, page 72), so  ${\varphi_{\cE,p}}_{|\text{ITr}}$ is a Borel function. By Lemma \ref{arroto},  ${\varphi_{\cE,p}}_{|\text{ITr}}:\text{ITr}\to \SB$ is a Borel reduction from $\IF$ to  $\text{C}_\cP$. Therefore, as $\IF$ is $\Sigma^1_1$-hard, $\text{C}_\cP$ is $\Sigma^1_1$-hard.  
\end{proof}

\section{Descriptive complexity of the inevitable classes.}\label{newnew}

\subsection{Spaces containing a hereditarily indecomposable subspace.}\label{masha}

In 1991 W. T Gowers and B. Maurey independently solved the unconditional basic sequence problem, i.e., they constructed a Banach space with no unconditional basic sequence  (see \cite{GM}). It was noticed by W. B. Johnson that the space constructed by Gowers and B. Maurey not only had no unconditional basic sequence but was also hereditarily indecomposable. 

We say that an infinite dimensional Banach space $X$ is \emph{hereditatily indecomposable} if none of $X$ subspaces can be decomposed as a direct sum of two infinite dimensional subspaces. Clearly, the class $\text{HI}=\{X\in\SB\mid X\text{  is hereditarily} \allowbreak\text{indecomposable}\}$ is a pure class and it contains no $\ell_p$. Hence, Theorem \ref{lema} gives us the following.

\begin{cor}\label{hhii}
$\text{C}_{\text{HI}}$ is $\Sigma^1_1$-hard.
\end{cor}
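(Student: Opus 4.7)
My plan is to obtain this corollary as a direct application of Theorem \ref{lema}, so the task reduces to verifying its two hypotheses for $\mathcal{P} = \text{HI}$. The harder of the two conceptual points — that the Borel reduction $\varphi_{\mathcal{E},p}$ from $\text{IF}$ into $\SB$ does the job — is already packaged inside Theorem \ref{lema}, so no fresh construction is needed here.

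First I would note that HI is a pure class: by definition, a Banach space $X$ is hereditarily indecomposable if \emph{every} infinite-dimensional closed subspace is indecomposable, so if $X \in \text{HI}$ and $Y \subset X$ is an infinite-dimensional closed subspace, then every infinite-dimensional closed subspace of $Y$ is also an infinite-dimensional closed subspace of $X$ and hence indecomposable, giving $Y \in \text{HI}$. In particular, HI is closed under isomorphism (isomorphism preserves the lattice of subspaces and indecomposability), so it is a class in the sense of the paper. Purity then follows from the observation just made.

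Second I would check that $\ell_p \notin \text{HI}$ for any $p \in [1,\infty)$ (and the same for $c_0$). This is immediate from the fact that $\ell_p$ (respectively $c_0$) has an unconditional basis $(f_n)_{n \in \N}$, so one can split $\N = A \sqcup B$ into two infinite sets and obtain a decomposition $\ell_p = \overline{\text{span}}\{f_n : n \in A\} \oplus \overline{\text{span}}\{f_n : n \in B\}$ as a topological direct sum into two infinite-dimensional subspaces. Hence $\ell_p$ is not even indecomposable, let alone hereditarily indecomposable, and likewise for $c_0$.

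With these two points verified, Theorem \ref{lema} applies (in the stronger "pure" form) to conclude that $\text{C}_{\text{HI}} = \{Y \in \SB \mid \exists Z \in \text{HI},\ Z \hookrightarrow Y\}$ is $\Sigma^1_1$-hard. There is no real obstacle; the only thing to be slightly careful about is that the "pure" case of Theorem \ref{lema} requires $\mathcal{P}$ not merely to omit $\ell_p$ as an isomorphic subspace of some distinguished member, but to contain no space into which $\ell_p$ embeds — and purity plus the observation in the previous paragraph gives exactly that, since any embedding of $\ell_p$ into $X \in \text{HI}$ would force $\ell_p \in \text{HI}$ by purity.
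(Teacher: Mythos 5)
Your proposal is correct and follows exactly the route the paper takes: the paper's entire argument is the one-line observation that HI is a pure class containing no $\ell_p$, followed by an appeal to Theorem \ref{lema}. You have merely spelled out the (easy) verifications that the paper labels ``clearly,'' including the correct remark that purity upgrades ``does not contain $\ell_p$'' to ``$\ell_p$ embeds into no member of the class.''
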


We come back to the class of hereditarily indecomposable spaces in Subsection \ref{hiGM}, where we  show  that the set \text{HI} is complete coanalytic, and that $\text{C}_{\text{HI}}$ is at most $\Sigma^1_2$.

\subsection{Spaces containing a continuously tight subspace.}\label{sectiontight}

In \cite{FR}, Ferenczi and Rosendal defined a new class of Banach spaces, the class of continuously tight spaces, and proved many interesting properties about this class. In Theorem 3.13 of \cite{FR}, for example, Ferenczi and Rosendal have shown that every Banach space must contain either a minimal subspace or a continuously tight subspace, giving us another dichotomy for Banach spaces.

Denote by $[\N]$ the set of increasing sequences of natural numbers. We can see $[\N]$ as a Borel subset of $\N^\N$. A basic sequence $(e_n)_{n\in\N}$ is called \emph{continuously tight} if there exists a continuous function $f:bb(e_n)\to [\N]$ such that, for all block basis $\bar{y}=(y_n)_{n\in\N}\in bb(e_n)$, if we set $I_j=\{m\in\N\mid f(\bar{y})_{2j}\leq m\leq f(\bar{y})_{2j+1}\}$, then for all infinite set $A\subset \N$, 

$$\overline{\text{span}}\{\bar{y}\}\not\hookrightarrow\overline{\text{span}}\{e_n\mid n\not\in \cup_{j\in A} I_j\},$$\\
\noindent i.e., $\overline{\text{span}}\{\bar{y}\}$ does not embed into $\overline{\text{span}}\{e_n\}$ avoiding an infinite number of the intervals $I_j$. A space with a continuously tight basis is called \emph{continuously tight}. The Tsirelson space is an example of a continuously tight Banach space (see \cite{FR}, Corollary 4.3).

 For a detailed study of continuously tight spaces and other related properties (e.g., tight spaces, tight with constants, tight by range, etc) see \cite{FR}. Let $\text{CT}=\{X\in\SB\mid X\text{ is continuoulsy tight}\}$ be our coding for the class of continuously tight separable Banach spaces. 
 
\begin{cor}\label{cu}
$\text{C}_\text{CT}$ is $\Sigma^1_1$-hard.
\end{cor}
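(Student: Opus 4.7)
The plan is to apply Theorem \ref{lema} directly to $\mathcal{P} = \text{CT}$. First, $\text{CT}$ is isomorphism-invariant: an isomorphism between Banach spaces carries a continuously tight basis to one, preserving the continuity of the tightness witness $f$. Hence $\text{CT}$ is a class of Banach spaces in the sense defined before Theorem \ref{lema}. The two remaining hypotheses to verify are: (a) $\text{CT}$ is almost-pure, and (b) there exists $p\in[1,\infty)$ such that $\ell_p$ does not embed into any $Y\in\text{CT}$.

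For (a), fix $X\in\text{CT}$ and an infinite dimensional subspace $Y\subset X$. By the Ferenczi--Rosendal main dichotomy (\cite{FR}, Theorem 3.13), $Y$ contains an infinite dimensional subspace $Z$ which is either minimal or continuously tight. The second alternative gives $Z\in\text{CT}$ immediately. Suppose for contradiction the first alternative holds, so $Z\subset X$ is minimal. Fix a continuously tight basis $(e_n)$ of $X$ with witness $f$, and by Bessaga--Pelczynski find a subspace $W\subset Z$ spanned by a normalized block basis $\bar{z}=(z_n)$ of $(e_n)$. Since $W$ inherits minimality from $Z$, $W$ embeds into each of its infinite dimensional block subspaces. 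Apply $f$ to $\bar{z}$ to obtain intervals $(I_j)_{j\in\N}$ with $\max I_j<\min I_{j+1}$ such that $W$ does not embed into $\overline{\text{span}}\{e_n : n\notin \cup_{j\in A} I_j\}$ for any infinite $A\subset\N$. Since the $z_n$'s have pairwise disjoint finite supports, a straightforward diagonal choice produces an infinite $A\subset\N$ and an infinite subsequence $(z_{n_k})$ whose supports lie in the complement of $\cup_{j\in A} I_j$. The block subspace $W'=\overline{\text{span}}\{z_{n_k}\}\subset W$ then both receives an embedding of $W$ by minimality and sits inside $\overline{\text{span}}\{e_n : n\notin\cup_{j\in A} I_j\}$, contradicting the tightness property of $f$. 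Hence only the continuously tight alternative occurs, so $\text{CT}$ is almost-pure.

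For (b), the same diagonalization argument shows that no continuously tight space contains a minimal subspace. Since $\ell_p$ (for any $p\in[1,\infty)$) and $c_0$ are all minimal, none of them embeds into any $Y\in\text{CT}$. Applying Theorem \ref{lema} with, say, $p=1$, we conclude that $\text{C}_\text{CT}$ is $\Sigma^1_1$-hard. The main obstacle is the rigorous construction in (a) of an infinite $A$ and subsequence $(z_{n_k})$ avoiding the tightness intervals; this should be achievable by a direct diagonal argument against the finiteness of $\text{supp}(z_n)$ and the growth of the $I_j$, and much of it can alternatively be short-circuited by invoking the incompatibility of minimal and tight established in \cite{FR}.
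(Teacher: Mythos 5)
Your proposal is correct and follows essentially the same route as the paper: verify the hypotheses of Theorem \ref{lema} for $\text{CT}$ by combining the Ferenczi--Rosendal dichotomy (Theorem 3.13 of \cite{FR}) with the fact that continuously tight spaces contain no minimal subspaces, the latter also excluding $\ell_p$. The only difference is that you re-derive that last fact by a direct diagonalization against the tightness intervals, whereas the paper simply cites Proposition 3.3 of \cite{FR}; your diagonal argument is sound (modulo the standard caveat that Bessaga--Pelczynski gives a subspace merely \emph{equivalent} to a block basis, which suffices since minimality and embeddability are isomorphism-invariant).
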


\begin{proof}
\text{Proposition 3.3} of \cite{FR} says that continuously tight spaces contain no minimal subspaces, and \text{Theorem 3.13} of \cite{FR} says that a space with no minimal subspaces contains a continuously tight subspace. Therefore,  $\text{CT}$ is an almost-pure class. Also, again by Proposition 3.3 of \cite{FR}, we have that no elements of $\text{CT}$ contain an isomorphic copy of $\ell_p$. Hence, Theorem \ref{lema} gives us that $\text{C}_\text{CT}$ is $\Sigma^1_1$-hard.
\end{proof}

We cannot obtain any lower bound for the complexity of the set of continuously tight Banach spaces with the method of $\ell_p$-Baire sums. We come back to the class of continuously tight spaces in Subsection \ref{sectiontightmesmo}, where we  show  that the set \text{CT} is $\Pi^1_1$-hard by using a different method.

\subsection{Spaces containing a minimal subspace.}\label{sectionminimal}

A Banach space $X$ is called \emph{minimal} if every infinite dimensional subspace of $X$ contains an isomorphic copy of $X$. We now turn our attention to the following: Although $\text{M}=\{X\in\SB\mid X\text{ is}\allowbreak \text{ minimal}\}$ is clearly a pure class, $\text{M}$ contains $c_0$ and $\ell_p$, for all $p\in[1,\infty)$. Therefore, \text{Theorem \ref{lema}} does not say anything about the  complexity of $\text{C}_\text{M}$. However, we can use the  construction of Tsirelson space by T. Figiel and W. B. Johnson (see \cite{FJ}, Section $2$) in order to construct a $\varphi:\Tr\to\SB$ that will solve our problem. 

Fix a compatible enumeration of $\NN$, i.e., $(s_n)_{n\in\N}$ such that $s_n\preceq s_m$ implies $n\leqslant m$ and for all $s\in\NN$ there exists $n\in\N$ such that $s_n=s$. This enumeration give us an order on $\N^{<\N}$. With this ordering in mind, we say that $E_0<E_1$ if $\max E_0<\min E_1$, for all  finite sets $E_0, E_1\subset \NN$. We write $k<E$ if $\{s_k\}<E$. 

Given  $\theta\in\Tr$, $E\subset\theta$, and $x=\sum_{n\in\N}a_{s_n}e_{s_n}\in c_{00}(\theta)$ (for some $(a_{s_n})_{n\in\N}\in\R^\N$), we let $Ex=\sum_{s_n\in E}a_{s_n}e_{s_n}$.

We define a Borel function $\varphi:\text{Tr}\to\SB$ as,  for each $\theta\in\text{Tr}$ and each $x=(x(s))_{s\in\theta}\in c_{00}(\theta)$, let $(\|.\|_{\theta,m})_{m\in\N}$ be inductively defined by

\begin{align*}
\|x\|_{\theta,0}&=\|x\|_0,\text{ and }\\
\|x\|_{\theta,m+1}&=\max\{\|x\|_0,\ds{\frac{1}{2}}\max\sum_{i=1}^k\|E_ix\|_{\theta,m}\},
\end{align*}

\noindent for all $m\in\N$, where the ``inner$"$ maximum above is taken over all $k\in\N$ and all completely incomparable finite sets $(E_i)_{i=1}^k$ ($E_i\subset \theta$, for all $i\in\{1,...,k\}$) such that $k\leqslant E_1<...<E_k$ (for the definition of completely incomparable sets of a tree, see Section \ref{definition}). Exactly as we have for the standard Tsirelson space, we can define a norm $\|.\|_\theta$ as 

$$\|x\|_\theta=\underset{m\to\infty}{\lim}\|x\|_{\theta,m},$$\\
\noindent for all $x\in c_{00}(\theta)$. We define $\varphi(\theta)$ to be the completion of $c_{00}(\theta)$ under this norm. This norm can be implicitly defined as

$$\left\|x\right\|_\theta=\max\{\|x\|_0,\ds{\frac{1}{2}}\max\sum_{i=1}^k\|E_ix\|_\theta\},$$\\
\noindent where the ``inner$"$ maximum above is taken over all $k\in\N$ and all completely incomparable finite sets $(E_i)_{i=1}^k$ ($E_i\subset \theta$, for all $i\in\{1,...,k\}$) such that $k\leqslant E_1<...<E_k$. 

By the universality of $C(\Delta)$ for separable Banach spaces, we can identify $\varphi(\NN)$ with an isometric copy inside of $C(\Delta)$. As we can identify each $\varphi(\theta)$ with a subspace of $\varphi(\NN)$  in a natural fashion, we can see $\varphi$ as a Borel function from $\Tr$ to $\SB$ (see \cite{S}, Proposition 3.1, page 79, for similar arguments).

\begin{thm}\label{lemaminimal}
Let $\varphi:\Tr\to\SB$ be defined as above. Then $\varphi$ is a Borel function with the following properties

\begin{enumerate}[(i)]
\item $c_0\emb\varphi(\theta)$, for all $\theta\in\IF$, and
\item $\varphi(\theta)$ has the following property if $\theta\in\WF$: for every infinite dimensional subspace $E\subset\varphi(\theta)$, there exists a further subspace $F\subset E$ isomorphic to an infinite dimensional subspace of Tsirelson space, i.e., $\varphi(\theta)$ is Tsirelson-saturated.
\end{enumerate}

\noindent In particular,  $\text{C}_\text{M}$ is $\Sigma^1_1$-hard, and  $\text{C}_{c_0}$ cannot be Borel separated from the set of Tsirelson-saturated Banach spaces.
\end{thm}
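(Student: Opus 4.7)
The plan is to establish Borelness of $\varphi$, prove (i) and (ii), and then derive the two consequences; the structure parallels Lemma \ref{arroto}. For Borelness, argue as in Proposition 3.1 of \cite{S}: for each fixed $x \in c_{00}(\NN)$, the finite-level norm $\theta \mapsto \|x\|_{\theta,m}$ is Borel by induction on $m$ (at each step one takes a finite supremum over the Borel condition ``$(E_i)_{i=1}^k \subset \theta$ completely incomparable with $k \leq E_1 < \cdots < E_k$''), and $\|x\|_\theta$ is its monotone pointwise limit; composing with the canonical inclusion $\varphi(\theta) \subset \varphi(\NN) \emb C(\Delta)$ yields a Borel map $\Tr \to \SB$. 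For (i), pick $\theta \in \IF$ and a branch $\beta$. Since any two distinct nodes of $\beta$ are $\preceq$-comparable, no two completely incomparable sets $E_1, E_2 \subset \theta$ can both meet $\beta$ non-trivially, so for $x$ supported on $\beta$ the recursion step has at most one non-zero summand, bounded by $\tfrac{1}{2}\|x\|_{\theta,m}$. Inducting on $m$ gives $\|x\|_\theta = \|x\|_0$, so $\varphi(\beta) \cong c_0$ embeds isometrically into $\varphi(\theta)$.

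For (ii), the implicit norm equation makes $(e_s)_{s \in \theta}$ a $1$-unconditional basis of $\varphi(\theta)$, so by Lemma \ref{lemageral} every infinite-dimensional $E \subset \varphi(\theta)$ contains a basic sequence equivalent to a semi-normalized block basis $(x_k)_{k \in \N}$ of $(e_s)_{s \in \theta}$ with completely incomparable supports. The heart of the argument is the claim that every such $(x_k)$ is equivalent to a block basis of the classical Tsirelson space $T$; this immediately yields (ii), since block bases of $T$ span subspaces of $T$. To prove the claim, let $(t_k)$ be the Tsirelson basis and, for $y = \sum a_k x_k$, compare $\|y\|_\theta$ with $\|\tilde y\|_T$ where $\tilde y = \sum a_k t_k$. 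For $\|\tilde y\|_T \lesssim \|y\|_\theta$, lift any admissible interval family $(F_i)_{i=1}^n$ in $\N$ (with $n \leq F_1 < \cdots < F_n$) to $E_i = \bigcup_{k \in F_i} \text{supp}(x_k) \subset \theta$: complete incomparability of the supports makes the $E_i$ pairwise completely incomparable, the compatibility of the enumeration of $\NN$ preserves $E_1 < \cdots < E_n$, and the Schreier gap $n \leq E_1$ persists after passing, if necessary, to a subsequence of $(x_k)$ with rapidly growing support indices. The reverse inequality is obtained by projecting any admissible $(E_i)_{i=1}^n \subset \theta$ to $F_i = \{k : \text{supp}(x_k) \cap E_i \neq \emptyset\}$: complete incomparability of both the $E_i$ and the supports forces the $F_i$ to be pairwise disjoint with the appropriate Schreier gap, while partial-support intersections are absorbed using unconditionality of $(e_s)$. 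A double induction on the recursion depth of the two norms closes the argument.

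For the consequences, classical results say that Tsirelson space contains no minimal subspace and does not contain $c_0$. Thus by (ii), $\varphi(\theta) \notin \text{C}_\text{M}$ for $\theta \in \WF$, while by (i), $\varphi(\theta) \in \text{C}_\text{M}$ for $\theta \in \IF$ (since $c_0$ is minimal). Hence $\varphi$ Borel-reduces $\IF$ to $\text{C}_\text{M}$, making the latter $\Sigma^1_1$-hard. For the Borel-inseparability statement, any Borel $B \subset \SB$ with $\text{C}_{c_0} \subseteq B$ and $B$ disjoint from the Tsirelson-saturated spaces would pull back under $\varphi$ to a Borel set separating $\IF$ from $\WF$, contradicting the fact that $\IF$ is not Borel. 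The main technical obstacle is the norm-equivalence argument in the second paragraph: one must verify that the correspondence between ``completely incomparable with Schreier gap'' families in $\theta$ and ``interval with Schreier gap'' families in $\N$ faithfully transfers admissibility in both directions, especially in the upper bound where admissible families in $\theta$ can slice through individual supports.
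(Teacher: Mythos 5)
Your overall strategy matches the paper's: Borelness as in Schlumprecht's notes, part (i) by observing that on a branch at most one member of a completely incomparable family can act nontrivially, part (ii) by combining Lemma \ref{lemageral} with a two-sided norm comparison against Tsirelson space, and the two consequences by the standard Borel-reduction and inseparability arguments. The Borelness, part (i), your lower bound $\|\sum_k a_k t_k\|_T \lesssim \|\sum_k a_k x_k\|_\theta$ (lifting admissible families of $\N$ to unions of supports, which are completely incomparable and correctly ordered because the supports are), and the derivation of the consequences are all sound and agree with the paper.

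The gap is in the upper bound $\|\sum_k a_k x_k\|_\theta \lesssim \|\sum_k a_k t_{k}\|_T$. Your projection $F_i=\{k : \mathrm{supp}(x_k)\cap E_i\neq\emptyset\}$ does \emph{not} produce pairwise disjoint sets: complete incomparability of $E_1$ and $E_2$ only says that a node of $E_1$ and a node of $E_2$ are $\preceq$-incomparable, and nothing prevents a single $\mathrm{supp}(x_k)$ (whose own nodes may be mutually incomparable) from meeting both. So one $x_k$ can be sliced by many $E_i$, each $E_i$ can cut through several supports, and the assertion that these overlaps are "absorbed using unconditionality" is not an argument --- this is exactly the delicate bookkeeping that produces the constant $18$ in the classical theory, and your "double induction" does not carry it out. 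The paper sidesteps the issue entirely: it introduces the unrestricted Tsirelson norm $\|\cdot\|_{T,\theta}$ on $c_{00}(\theta)$ (same recursion, but with no incomparability requirement on the $E_i$), notes the trivial inequality $\|\cdot\|_\theta\leq\|\cdot\|_{T,\theta}$ since the supremum is over fewer families, and then quotes Proposition II.4 of \cite{CS} to get $\|\sum_k a_k y_k\|_{T,\theta}\leq 18\,\|\sum_k a_k e_{s_{p_k+1}}\|_{T,\theta}$, closing the loop with the tree analogue of Lemma II.1 of \cite{CS} (Lemma \ref{leamm}) for the lower bound. To repair your proof you should either reduce to that classical Casazza--Shura estimate as the paper does, or genuinely redo the slicing analysis in the tree setting.
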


Before  proving this theorem, notice the following trivial consequence of Lemma \ref{u8u}.

\begin{lemma}\label{u8uu}
A finite sum of  spaces satisfying property $\text{(ii)}$ of Theorem \ref{lemaminimal} still has property $\text{(ii)}$.
\end{lemma}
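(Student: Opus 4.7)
The plan is to reduce the statement to Lemma \ref{u8u} and exploit the pullback of an isomorphism. Let $X_1,\ldots,X_n$ be Banach spaces each satisfying property (ii), and let $Y$ be an infinite dimensional subspace of $X=\oplus_{i=1}^n X_i$. I want to produce an infinite dimensional $F\subset Y$ isomorphic to an infinite dimensional subspace of Tsirelson space.

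First I would apply Lemma \ref{u8u} to the subspace $Y\subset X$: this gives some $j\in\{1,\ldots,n\}$ such that the standard projection $P_j:Y\to X_j$ is not strictly singular. By the very definition of a non strictly singular operator, there exists an infinite dimensional subspace $Z\subset Y$ such that $P_j|_Z:Z\to X_j$ is a (linear topological) isomorphism onto its image. Set $E=P_j(Z)\subset X_j$; then $E$ is an infinite dimensional subspace of $X_j$.

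Now I would invoke property (ii) for $X_j$: there is an infinite dimensional subspace $W\subset E$ that is isomorphic to an infinite dimensional subspace of Tsirelson space. Let $F=(P_j|_Z)^{-1}(W)\subset Z\subset Y$. Because $P_j|_Z$ is an isomorphism onto its image, $F$ is isomorphic to $W$, and hence to an infinite dimensional subspace of Tsirelson space. This produces the required $F\subset Y$.

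There is essentially no obstacle beyond correctly invoking Lemma \ref{u8u} and the standard characterization of strictly singular operators (not strictly singular $\iff$ isomorphism on some infinite dimensional subspace). The only thing to be mindful of is that the isomorphism is only up to equivalence of norms, but since property (ii) is stated up to isomorphism with subspaces of Tsirelson space, this suffices. Iterating over pairs (or using induction on $n$) is unnecessary since Lemma \ref{u8u} already handles arbitrary finite sums in a single application.
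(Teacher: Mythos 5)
Your proof is correct and follows exactly the route the paper intends: the paper states this lemma as a ``trivial consequence of Lemma \ref{u8u}'' without writing out details, and your argument (find a non strictly singular coordinate projection, pass to a subspace where it is an embedding, apply Tsirelson-saturation in that coordinate, and pull back) is precisely that consequence. No gaps.
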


\noindent\emph{Proof of Theorem \ref{lemaminimal}.}
If $\theta\in\IF$, it is clear that $c_0\emb\varphi(\theta)$. Say $\theta\in\WF$, let us show that every infinite dimensional subspace of $\varphi(\theta)$ contains a subspace isomorphic to an infinite dimensional subspace of Tsirelson's space. Say $E\subset \varphi(\theta)$ is an infinite dimensional subspace.

As $\theta\in\WF$, Lemma \ref{lemageral} gives us that $E$ contains a sequence equivalent to a  block sequence $(y_n)_{n\in\N}$ of $\varphi(\theta)$ with completely incomparable supports. We will be done once we prove the following claim.\\

\textbf{Claim:}  $(y_n)_{n\in\N}$ is equivalent to a subsequence of the standard basis of Tsirelson space.\\

Let $\|\cdot \|_{T,\theta}$ be the standard Tsirelson norm on $c_{00}(\theta)$, i.e.,    

$$\left\|x\right\|_{T,\theta}=\max\{\|x\|_0,\ds{\frac{1}{2}}\max\sum_{i=1}^k\|E_ix\|_{T,\theta}\},$$\\
\noindent where the ``inner$"$ maximum above is taken over all $k\in\N$ and all finite sets $(E_i)_{i=1}^k$ ($E_i\subset \theta$, for all $i\in\{1,...,k\}$) such that $k\leqslant E_1<...<E_k$. The only difference between $\|\cdot\|_\theta$ and $\|\cdot \|_{T,\theta}$ is that in $\|\cdot\|_{T,\theta}$ we do not have the restriction of $(E_i)_{i=1}^k$  being completely incomparable. 

It is clear that the the basis $(e_s)_{s\in\NN}$ of the completion of $(c_{00}(\theta),\|\cdot \|_{T,\theta})$ is equivalent to the standard basis of Tsirelson space. Moreover, the basis $(e_s)_{s\in\theta}$ of the completion of $(c_{00}(\theta),\|\cdot \|_{T,\theta})$ is equivalent to a subsequence of the standard  basis of Tsirelson space, if $\theta$ is infinite (this because this norm has no dependance on the structure of the tree $T$). Also, we clearly have

\begin{align}\label{uperb}
\|\sum_{i=1}^ka_iy_i\|_\theta\leqslant\|\sum_{i=1}^ka_iy_i\|_{T,\theta},
\end{align}\\
\noindent for all $a_1,...,a_k\in\R$. Mimicking the proof of \text{Lemma II.1} of \cite{CS} we have the following lemma.

\begin{lemma}\label{leamm}
Let $(p_n)_{n\in\N}$ be a increasing sequence of natural numbers. Let $(e_{s_n})_{n\in\N}$ be the standard basis of $\varphi(\theta)$. Let $y_n=\sum_{i=p_n+1}^{p_{n+1}}b_ie_{s_i}$ (for all $n\in\N$) be a normalized block sequence of $(e_{s_n})_{n\in\N}$ and assume $(y_n)_{n\in\N}$ has completely incomparable supports. Then

$$\|\sum_{n\in\N}a_ne_{s_{p_n+1}}\|_\theta\leqslant\|\sum_{n\in\N}a_ny_n\|_\theta,$$\\
\noindent for any sequence of scalars $(a_n)_{n\in\N}$.
\end{lemma}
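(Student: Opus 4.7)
The plan is to prove, by induction on $m \in \N$, the sharper inequality
\[
\Bigl\|\sum_{n\in\N} a_n e_{s_{p_n+1}}\Bigr\|_{\theta,m} \leq \Bigl\|\sum_{n\in\N} a_n y_n\Bigr\|_\theta
\]
for every finitely supported sequence of scalars $(a_n)$; the lemma then follows by letting $m \to \infty$. Write $T = \sum a_n e_{s_{p_n+1}}$ and $Y = \sum a_n y_n$ for brevity. The base case $m=0$ relies on the $1$-unconditionality of $\|\cdot\|_\theta$, which is immediate from the recursive definition (the norm depends only on absolute values of coefficients and is monotone under coordinate restrictions, by a routine induction on the level $m$). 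Combined with the disjointness of the supports of the $y_n$ and the normalization $\|y_n\|_\theta = 1$, this yields $|a_n| = \|a_n y_n\|_\theta \leq \|Y\|_\theta$ for each $n$, hence $\|T\|_{\theta,0} = \max_n |a_n| \leq \|Y\|_\theta$.

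For the inductive step from $m$ to $m+1$, I use the implicit recursion
\[
\|T\|_{\theta,m+1} = \max\Bigl\{\|T\|_0,\ \tfrac{1}{2}\sum_{i=1}^{k}\|E_iT\|_{\theta,m}\Bigr\},
\]
where $(E_i)_{i=1}^{k}$ ranges over admissible families, i.e., completely incomparable finite subsets of $\theta$ with $k \leq E_1 < \cdots < E_k$. The first case reduces to the base. In the second, I set $N_i = \{n : s_{p_n+1} \in E_i\}$ and
\[
F_i = \bigcup_{n \in N_i} \text{supp}(y_n) \subset \theta,
\]
so that $E_i T = \sum_{n \in N_i} a_n e_{s_{p_n+1}}$ and $F_i Y = \sum_{n \in N_i} a_n y_n$. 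The key verification is that $(F_i)_{i=1}^k$ is admissible for the implicit formula defining $\|Y\|_\theta$. Complete incomparability follows because the $y_n$ have pairwise completely incomparable supports and the sets $N_i$ are disjoint (since $E_i \cap E_j = \emptyset$ whenever $i \neq j$, by complete incomparability of the $E_i$); the ordering $F_1 < \cdots < F_k$ and the bound $k \leq F_1$ follow from the corresponding properties of $(E_i)$ together with the block structure $\text{supp}(y_n) = \{s_{p_n+1},\ldots,s_{p_{n+1}}\}$. Applying the inductive hypothesis to each subsum gives $\|E_i T\|_{\theta,m} \leq \|F_i Y\|_\theta$, and summing yields $\|T\|_{\theta,m+1} \leq \tfrac{1}{2}\sum_i \|F_i Y\|_\theta \leq \|Y\|_\theta$, completing the induction.

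The main obstacle I anticipate is the careful bookkeeping in the admissibility check for $(F_i)$, especially the boundary condition $k \leq F_1$. Concretely, if $n_0 = \min N_1$, then $\min F_1$ sits at enumeration position $p_{n_0}+1$; combined with the inclusion $s_{p_{n_0}+1} \in E_1$ and the hypothesis $k \leq \min E_1$, this forces $k \leq p_{n_0}+1$, as required. This is precisely the reason the lemma singles out the \emph{first} support element $s_{p_n+1}$ of each block $y_n$: any other choice would lose the control on positions needed to inherit the Tsirelson admissibility constraint from $(E_i)$ to $(F_i)$.
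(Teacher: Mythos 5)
Your proof is correct and follows essentially the same route as the paper's: induction on $m$ via the finite-level norms, restriction of the admissible family $(E_i)$ to the support $\{s_{p_n+1}\}$, application of the inductive hypothesis to each subsum, and the observation that the transferred family $(F_i)$ of unions of supports of the $y_n$ remains admissible thanks to complete incomparability and the position bound $k \leq p_{n}+1$. Your write-up is somewhat more explicit than the paper's about the admissibility verification (which the paper dispatches in one sentence), but there is no substantive difference in the argument.
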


\begin{proof}
It is enough to show that, for any sequence $(a_n)_{n\in\N}$, we have 

$$\|\sum_{n\in\N}a_ne_{s_{p_n+1}}\|_{\theta,m}\leqslant\|\sum_{n\in\N}a_ny_n\|_\theta,$$\\
\noindent for all $m\in\N$. Let us proceed by induction on $m\in\N$. For $m=0$ the result is clear. Assume the equation above holds for a fixed $m\in\N$. Let $x=\sum_{n\in\N}a_ne_{p_n+1}$ and $y=\sum_{n\in\N}a_ny_n$. Fix $k\in\N$, and completely incomparable finite sets $(E_n)_{n=1}^k$ such that $k\leqslant E_1<...<E_k$. Consider the sum

$$\ds{\frac{1}{2}}\sum_{i=1}^k\|E_ix\|_{\theta,m}.$$\\
\noindent Since the support of $x$ is contained in $\{p_n+1\mid n\in\N\}$, we may assume that 

$$E_i\subset \{p_n+1\mid n\in\N\},$$\\
\noindent for all $i\in\N$. Applying the inductive hypothesis, we have

$$\ds{\frac{1}{2}}\sum_{i=1}^k\|E_ix\|_{\theta,m}\leqslant \ds{\frac{1}{2}}\sum_{i=1}^k\|\sum_{\substack{n\in\N\\ p_n+1\in E_i}}a_ny_n\|_{\theta}$$\\
\noindent As $p_n+1\in E_i$ implies $k\leqslant p_n+1$, and as $(y_n)_{n\in\N}$ has completely incomparable supports, the sum on the right hand side of the equation above is allowed as an ``inner$"$ sum in the definition of the norm $\|.\|_{\theta}$. Therefore, we have 

$$\ds{\frac{1}{2}}\sum_{i=1}^k\|E_ix\|_{\theta,m}\leqslant\|y\|_{\theta},$$\\
\noindent for all $k\in\N$, and all completely incomparable finite sets $(E_n)_{n=1}^k$  such that $k\leqslant E_1<...<E_k$. Hence, for any sequence of scalars $(a_n)_{n\in\N}$, we have

$$\|\sum_{n\in\N}a_ne_{s_{p_n+1}}\|_{\theta,m+1}\leqslant\|\sum_{n\in\N}a_ny_n\|_\theta,$$\\
\noindent and we are done.
\end{proof}

Let $(b_n)_{n\in\N}$ be the sequence of scalars such that our block sequence $(y_n)_{n\in\N}$ can be written as  $y_n=\sum_{i=p_n+1}^{p_{n+1}}b_ie_{s_i}$. Then, \text{Lemma \ref{leamm}} gives us that

\begin{align}\label{eq2}
\|\sum_{n\in\N}a_ne_{s_{p_n+1}}\|_\theta\leqslant\|\sum_{n\in\N}a_ny_n\|_\theta,
\end{align}\\
\noindent for any sequence of scalars $(a_n)_{n\in\N}$. As the supports of $(y_n)_{n\in\N}$ are completely incomparable we have that 

\begin{align}\label{eq3}
\|\sum_{n\in\N}a_ne_{s_{p_n+1}}\|_\theta=\|\sum_{n\in\N}a_ne_{s_{p_n+1}}\|_{T,\theta},
\end{align}\\
\noindent for any sequence of scalars $(a_n)_{n\in\N}$. By \text{Proposition II.4} of \cite{CS}, we have 

\begin{align}\label{lowerb}
\ds{\frac{1}{18}}\|\sum_{n\in\N}a_ny_n\|_{T,\theta}\leqslant\|\sum_{n\in\N}a_ne_{s_{p_n+1}}\|_{T,\theta},
\end{align}\\ 
\noindent for all sequence of scalars $(a_n)_{n\in\N}$. Therefore, putting Equation \ref{uperb}, Equation \ref{eq2}, Equation \ref{eq3}, and Equation \ref{lowerb} together, we have that

$$\|\sum_{i=1}^ka_ne_{s_{p_n+1}}\|_{T,\theta}\leqslant\|\sum_{i=1}^ka_ny_n\|_\theta\leqslant18\|\sum_{i=1}^ka_ne_{s_{p_n+1}}\|_{T,\theta},$$\\
\noindent for all  $a_1,...,a_k\in\R$.  Hence, the sequence $(y_n)_{n\in\N}$ as a sequence in $\varphi(\theta)$ is equivalent to the sequence $(e_{s_{p_n+1}})_{n\in\N}$ as a sequence in the Tsirelson space (the completion of $(c_{00}(\theta), \|\cdot\|_{T,\theta})$).

To conclude that $\varphi(\theta)$ contains no minimal  subspaces if $\theta\in\WF$, recall that Tsirelson space contains no minimal subspaces (see \cite{CS}, Corollary VI.b.6), so we are done.\qed\\

It is easy to see, by simply counting quantifiers, that the set $\text{C}_\text{M}$ is at most $\Sigma^1_3$, i.e., the Borel image of a set that can be written as the complement of the Borel image of a coanalytic set. However, by using Gowers' theorem, Rosendal was able to find a better upper bound for $\text{C}_\text{M}$ (see \cite{R}, Appendix). 

\begin{thm}\textbf{(C. Rosendal)}
$\text{C}_\text{M}$ is $\Sigma^1_2$.
\end{thm}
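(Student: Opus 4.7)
The plan is to invoke Rosendal's notion of continuous minimality, developed in the appendix of \cite{R}, to replace the $\forall$-quantifier that appears in the naive unfolding of minimality by an $\exists$-quantifier over a continuous selector. A direct unfolding of ``$Y$ contains a minimal subspace'' reads $\exists Z\subseteq Y\ \forall W\subseteq Z\ (Z\hookrightarrow W)$, which is $\exists\forall\Sigma^1_1$ and therefore only $\Sigma^1_3$; the reformulation collapses this down to $\Sigma^1_2$.

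The first step is the decomposition
$$\text{C}_\text{M} \; = \; \text{C}_{c_0} \; \cup \; \text{C}_{\text{cM}},$$
where $\text{C}_{\text{cM}}=\{Y\in\SB \mid Y \text{ contains a continuously minimal subspace}\}$. The inclusion $\supseteq$ holds because $c_0$ is itself minimal and every continuously minimal space is minimal. For $\subseteq$, given a minimal $Z\subseteq Y$, either $c_0\hookrightarrow Z$ (so $Y\in\text{C}_{c_0}$) or $c_0\not\hookrightarrow Z$, in which case Rosendal's coincidence theorem --- minimal together with $c_0$-free implies continuously minimal --- places $Z$ in the continuously minimal class and hence $Y$ in $\text{C}_{\text{cM}}$.

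The second step is to bound each piece by $\Sigma^1_2$. $\text{C}_{c_0}$ is $\Sigma^1_1$ by Bossard's Theorem \ref{nonborel}. For $\text{C}_{\text{cM}}$, I would unfold $Y\in\text{C}_{\text{cM}}$ as the existence of a basic sequence $(y_n)\in\cB$ with $y_n\in Y$ for all $n$ (a Borel condition via the selectors $S_n$), a continuous selector $\Phi$ on $bb(y_n)$, and a constant $K>0$ realizing the definition of continuous minimality. The selector $\Phi$, being a continuous map between Polish spaces, is coded by countable data --- its values on a countable dense subset of $bb(y_n)$ --- so its existence is an existential quantifier over a standard Borel space. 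Once $\Phi$ and $K$ are fixed, the defining property reduces to a countable list of $K$-embedding assertions, which is Borel. Thus $\text{C}_{\text{cM}}\in\Sigma^1_1$, and the union $\text{C}_\text{M}$ lies in $\Sigma^1_2$.

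The main obstacle, and the reason the argument is genuinely non-trivial, is step one: Rosendal's coincidence theorem for minimal $c_0$-free spaces is a delicate application of Gowers' Ramsey-type theorem. Given that ingredient, the complexity computation is essentially formal, combining the standard coding of continuous maps between Polish spaces with a uniform bound on the embedding constant so that the selector-property remains Borel rather than analytic.
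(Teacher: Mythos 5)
Your overall strategy coincides with the paper's: decompose $\text{C}_\text{M}$ as $\text{C}_{c_0}$ together with the set of spaces witnessing Rosendal's continuous-selector reformulation of minimality for $c_0$-free spaces, note that $\text{C}_{c_0}$ is $\Sigma^1_1$, and count quantifiers on the remaining piece. The decomposition and the appeal to the appendix of \cite{R} are exactly what the paper does. The gap is in your claim that the second piece is $\Sigma^1_1$. Once the basic sequence $(e_n)$, the block subsequence $(y_n)$, the selector $\Phi$ and the constant $K$ are fixed, the defining property is not ``a countable list of $K$-embedding assertions'': it is a universal quantifier over all $\bar z=(z_n)\in bb(y_n)$, and $bb(y_n)$ is an uncountable Polish space. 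To replace that quantifier by one over a countable dense subset you would need the inner condition to be closed in $\bar z$. The uniform equivalence $(e_n)\approx_K \Phi(\bar z)$ is indeed closed (a countable intersection of norm inequalities, each closed by continuity of $\Phi$), but the requirement $\overline{\text{span}}\{\Phi(\bar z)\}\subset\overline{\text{span}}\{z_n\}$ is not: convergence $\bar z^{(m)}\to\bar z$ in the product topology on $bb(y_n)$ only stabilizes finitely many blocks at a time, so $\overline{\text{span}}\{z^{(m)}_n\}$ does not converge to $\overline{\text{span}}\{z_n\}$ in any sense that preserves the containment in the limit. Hence density arguments do not apply and the quantifier over $bb(y_n)$ cannot be discharged.

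For fixed witnesses the set of $\bar z$ satisfying the inner condition is Borel, so demanding that it be all of $bb(y_n)$ is a genuine $\Pi^1_1$ condition on the witnesses; prefixing the existential quantifiers over the (standard Borel) spaces of codes then yields $\Sigma^1_2$, which is precisely the paper's count, and the theorem still follows once you make this correction. But note that your argument as written would give $\text{C}_\text{M}\in\Sigma^1_1$, hence, combined with the $\Sigma^1_1$-hardness from Theorem \ref{lemaminimal}, that $\text{C}_\text{M}$ is $\Sigma^1_1$-complete --- this would settle the open problem the paper explicitly raises about the exact complexity of $\text{C}_\text{M}$. A step with that consequence needs a real proof, and the one offered does not survive inspection; you should retreat to the honest $\Sigma^1_2$ count.
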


\begin{problem}
What is the exact complexity of $\text{C}_\text{M}$? Is it $\Sigma^1_2$-complete? 
\end{problem}

In Subsection \ref{outrodia}, we talk a little bit about Rosendal's proof for $\text{C}_\text{M}$ being $ \Sigma^1_2$. We will notice that Rosendal's  proof also gives us that \text{M} is at most $\Delta^1_2$.

\subsection{Continuously tight Banach spaces.}\label{sectiontightmesmo}

We are now capable of giving a lower bound for the complexity of CT, the set of continuously tight spaces. 

Recall, a Banach space $X$ with a basis $(x_k)_{k\in\N}$ is said to be \emph{strongly asymptotic $\ell_p$} if there exists a function $f:\N\to\N$ and a constant $C>0$ such that any set of $m$ unit vectors in $\overline{\text{span}}\{x_k\mid k\geq f(m)\}$ with disjoint supports is $C$-equivalent to the basis of $\ell_p^m$. The Tsirelson space with its standard basis is an example of an strongly asymptotic $\ell_1$ space (see \cite{CS}, Chapter V). Also, a Banach space $X$ is said to be \emph{crudely finitely representable} in a Banach space $Y$ if there exists $M>0$ such that every finite subspace of $X$ $M$-embeds into $Y$.

The proof below is an adaptation of the proof of Proposition 4.2,  in \cite{FR}.

\begin{thm}\label{tighttighthu}
The set of continuously tight spaces \text{CT} is $\Pi^1_1$-hard. Moreover, the set of continuously tight basis, say $\mathcal{CT}$, is $\Pi^1_1$-hard. 
\end{thm}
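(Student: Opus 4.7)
The plan is to establish $\Pi^1_1$-hardness of both $\text{CT}$ and $\mathcal{CT}$ simultaneously by constructing a Borel reduction from $\WF$. Specifically, I will reuse the Borel map $\varphi:\Tr\to\SB$ from Subsection \ref{sectionminimal} together with its natural basis $(e_s)_{s\in\theta}$ (which, via the fixed compatible enumeration of $\NN$ and the Kuratowski--Ryll-Nardzewski selection principle, can be produced as a Borel map $\Tr\to\cB$). I claim that, for $\theta\in\Tr$, the basis $(e_s)_{s\in\theta}$ is continuously tight if and only if $\theta\in\WF$, which simultaneously reduces $\WF$ to $\mathcal{CT}$ and to $\text{CT}$.

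The easy direction is $\theta\in\IF\Rightarrow$ not continuously tight: by Theorem \ref{lemaminimal}(i), $c_0\hookrightarrow\varphi(\theta)$, so $\varphi(\theta)$ contains a minimal subspace (since $c_0$ is minimal). By Proposition 3.3 of \cite{FR}, continuously tight spaces contain no minimal subspace, so $\varphi(\theta)\notin\text{CT}$ and a fortiori $(e_s)_{s\in\theta}\notin\mathcal{CT}$.

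The real work is $\theta\in\WF\Rightarrow (e_s)_{s\in\theta}\in\mathcal{CT}$, and this is where I would adapt the proof of Proposition 4.2 of \cite{FR} showing that the standard Tsirelson basis is continuously tight. Two structural properties drive that proof and must be transferred to $\varphi(\theta)$: (a) a strongly asymptotic $\ell_1$ property for normalized disjointly supported vectors of sufficiently high ``index,'' and (b) that no block subspace is crudely finitely representable in $\ell_1$. Property (a) for $(e_s)_{s\in\theta}$ follows from the implicit norm formula for $\varphi(\theta)$: any normalized family of vectors with completely incomparable supports whose indices all exceed some threshold $k$ admits the estimate $\|\sum_{i=1}^k a_i z_i\|_\theta \geq \tfrac12\sum |a_i|$ directly from the definition. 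Property (b) follows from Theorem \ref{lemaminimal}(ii) combined with Lemma \ref{leamm} and Proposition II.4 of \cite{CS}: every block subspace of $\varphi(\theta)$ contains a sequence $18$-equivalent to a subsequence of the Tsirelson basis, and Tsirelson is not crudely finitely representable in $\ell_1$ (e.g.\ by type/cotype considerations, cf.\ \cite{CS}).

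Granting (a) and (b), I construct the witness $f:bb(e_s)\to[\N]$ as in the Tsirelson case: given $\bar y=(y_n)\in bb(e_s)$, define $f(\bar y)_{2j}$ and $f(\bar y)_{2j+1}$ to be indices in the fixed enumeration of $\NN$ that grow sufficiently fast relative to the supports of $y_0,\dots,y_j$ (e.g.\ producing intervals $I_j$ much longer than needed to cover each $\operatorname{supp}(y_j)$ plus the asymptotic-$\ell_1$ threshold). Since $f(\bar y)_n$ depends only on $y_0,\dots,y_n$, $f$ is continuous. If for some infinite $A\subset\N$ there were an embedding $\overline{\mathrm{span}}\{\bar y\}\hookrightarrow\overline{\mathrm{span}}\{e_s : s_i\notin\bigcup_{j\in A}I_j\}$, then combining the embedded image with the asymptotic-$\ell_1$ estimate on the complement and the gap-creating effect of removing the $I_j$'s would produce $\ell_1^m$-copies with uniformly bounded constants inside the block subspace $\overline{\mathrm{span}}\{\bar y\}$, contradicting (b). The main obstacle I anticipate is precisely this last step: in $\varphi(\theta)$, norms are computed over completely incomparable segments rather than intervals, so the classical Tsirelson ``gap argument'' has to be reformulated in terms of the tree order on $\NN$. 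One must carefully choose the intervals $I_j$ (as sets in the enumeration) so that their removal forces any would-be embedded block vectors to have supports simultaneously far apart in the enumeration \emph{and} sufficiently spread in the tree to make the asymptotic-$\ell_1$ estimate of (a) applicable — a bookkeeping task of combining the enumeration order $(s_n)_{n\in\N}$ with the tree order, but one that does not require new ideas beyond those used for Tsirelson.
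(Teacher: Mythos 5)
Your reduction map, the easy direction, and the Borelness of $\theta\mapsto(e_s)_{s\in\theta}$ all coincide with the paper's. The divergence --- and the problem --- is in the hard direction. You try to build the witness $f:bb(e_s)\to[\N]$ directly by transplanting the Tsirelson gap argument, resting on your property (a): an asymptotic $\ell_1$ lower estimate for normalized vectors of high index. But in $\varphi(\theta)$ the $\tfrac12\sum|a_i|$ lower bound is available only when the supports are \emph{completely incomparable in the tree order}; it fails outright for vectors that are merely disjointly supported or separated in the enumeration order (a family supported along a single long chain of $\theta$ sees only the ground norm $\|\cdot\|_0$, with no $\ell_1$ estimate, no matter how large the enumeration indices are). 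Since a block basis $\bar y\in bb(e_s)$, and likewise any candidate embedded image inside $\overline{\text{span}}\{e_s\mid s_i\notin\bigcup_{j\in A}I_j\}$, is a block sequence only with respect to the enumeration and can be entirely tree-comparable, removing enumeration intervals $I_j$ does not create the incomparability needed to run your contradiction against (b). This is exactly the obstacle you flag at the end, but it is not bookkeeping: no choice of the $I_j$ as enumeration intervals forces tree-incomparability of would-be embedded vectors, so the step ``produce $\ell_1^m$-copies with uniformly bounded constants inside $\overline{\text{span}}\{\bar y\}$'' has no proof as written.

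The paper avoids this entirely by interposing the notion of being \emph{tight with constants} (no space embeds uniformly into all tail subspaces). That property concerns a candidate space $Y$ rather than a fixed block basis, so one may freely pass to a further subspace of $Y$ where Lemma \ref{lemageral} recovers completely incomparable supports and Theorem \ref{lemaminimal}(ii) makes $Y$ equivalent to a subsequence of the Tsirelson basis; the contradiction is then obtained via Johnson's lemma, crude finite representability in $\ell_1$, embedding into $L_1$, and the result of \cite{DFKO} that unconditional strongly asymptotic $\ell_1$ sequences spanning subspaces of $L_1$ generate copies of $\ell_1$. (Note that this chain, not a type/cotype count as you suggest, is what shows no subspace of Tsirelson is crudely finitely representable in $\ell_1$.) Finally, Proposition \ref{versa} converts tightness with constants into continuous tightness generically, with continuity of $f$ coming for free from choosing each cut-off $l_j$ minimally. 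To rescue your outline, you should prove tightness with constants first and then invoke Proposition \ref{versa} (equivalently, the argument of Proposition 4.1 of \cite{FR}) rather than rebuilding the interval construction by hand.
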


\begin{proof}
Let $\varphi:\Tr\to \SB$ be the map in Lemma \ref{lemaminimal}. As $c_0$ embeds into $\varphi(\theta)$ if $\theta\in\IF$, we only need to show that $\varphi(\theta)$ is continuously tight if $\theta\in\WF$.  Indeed, as the map $\theta\in\Tr\mapsto (e_{s})_{s\in\theta}\in\cB$ is a Borel map, this is enough to prove both assertions of  the theorem.

 A Banach space $X$ with a basis is said to be \emph{tight with constants} if no Banach space embeds uniformly into its tail subspaces (see \cite{FR}, Proposition 4.1). Let's show that $\varphi(\theta)$ is tight with constants, for all $\theta\in\WF$. As spaces which are tight with constants are also continuously tight (see Proposition \ref{versa} below) we will be done.

Assume towards a contradiction that, for some $K>0$, there exists a space $Y$ which $K$-embeds into all tail subspaces of $\varphi(\theta)$. By Theorem \ref{lemaminimal}, we can assume, by taking a subspace, that $Y$ is generated by a sequence $(y_k)_{k\in\N}$ which is equivalent to a subsequence of the basis of Tsirelson space. Therefore, $(y_k)_{k\in\N}$ is unconditional and strongly asymptotic $\ell_1$. Let $C>0$ and $f:\N\to \N$ be as in the definition of strongly asymptotic $\ell_1$ spaces.

By Proposition 1 of \cite{Jo}, we have that for all $m\in\N$, there exists $N(m)\in\N$ such that $(y_1,...,y_m)$ is $2K$-equivalent to a sequence of vectors in the linear span of $N(m)$ disjointly supported unit vectors in any tail of $Y$. In particular, in the tail $\overline{\text{span}}\{y_k\mid k\geq f(N(m))\}$. Therefore, as $Y$ is strongly asymptotic $\ell_1$, we have that $(y_1,...,y_m)$ $2KC$-embeds into $\ell_1$, for all $m\in\N$. So $Y$ is crudely finitely representable in $\ell_1$, and therefore $Y$ embeds into $
L_1$ (see \cite{AK}, Theorem 11.1.8). Hence, as $(y_k)_{k\in\N}$ is unconditional asymptotic $\ell_1$, we have that $Y$ contains $\ell_1$ (see \cite{DFKO}, Proposition 5), absurd, because $Y$ is a subspace of Tsirelson space.
\end{proof}

\begin{prop}\label{versa}
Let $X$ be a Banach space with basis $(e_n)_{n\in\N}$. Say $(e_n)_{n\in\N}$ is tight with constants (see definition in the proof above). Then $(e_n)_{n\in\N}$ is continuously tight. 
\end{prop}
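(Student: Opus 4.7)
The plan is to extract a continuous map $f:bb(e_n)\to[\N]$ from the tightness-with-constants hypothesis, and to verify the non-embedding clause in the definition of continuous tightness via a Bessaga--Pe\l czy\'nski gliding-hump argument combined with the definition of tightness with constants.

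For the definition of $f$, I would use the maximum supports of the block basis. Given $\bar y=(y_n)_{n\in\N}\in bb(e_n)$, let $q_k=\max\text{supp}(y_k)$ and put
\[
f(\bar y)_{2k}=q_k+1,\qquad f(\bar y)_{2k+1}=q_{k+1},
\]
so that the resulting interval $I_k=[q_k+1,q_{k+1}]$ contains $\text{supp}(y_{k+1})$. Since $q_k$ depends only on $y_k$, each coordinate of $f(\bar y)$ is determined by finitely many entries of $\bar y$, which gives continuity of $f$ with respect to the product topology that $bb(e_n)$ inherits from $D^{\N}$ (with $D$ discrete).

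To verify the non-embedding clause, fix an infinite $A\subset\N$ and set $Z_A=\overline{\text{span}}\{e_n:n\notin\bigcup_{j\in A}I_j\}$. Suppose, for contradiction, that $T:\overline{\text{span}}\{\bar y\}\to Z_A$ is an embedding with $\|T\|\|T^{-1}\|\leq K$. A standard gliding-hump / small-perturbation argument (\cite{AK}, Theorem 1.3.9) produces a subsequence $(y_{m_i})_{i\in\N}$ of $\bar y$ and a block basic sequence $(z_i)_{i\in\N}$ of $(e_n)$ with $z_i\in Z_A$ such that $(y_{m_i})\approx_{K'}(z_i)$ for some $K'$ depending only on $K$. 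By extracting a further subsequence we may additionally arrange $\min\text{supp}(z_i)\to\infty$ as fast as desired.

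The main obstacle is converting this into a contradiction with tightness with constants, which forbids a single Banach space from uniformly embedding into every tail of $(e_n)$. My plan is to repeat the extraction with a prescribed depth: for each $L\in\N$, one can force the resulting $(z_i)$ to lie in $\overline{\text{span}}\{e_n:n\geq L\}$, which yields a $K'$-embedding of some tail $\overline{\text{span}}\{y_{m_i}:i\geq i_0(L)\}$ of $\overline{\text{span}}\{\bar y\}$ into the $L$-tail of $(e_n)$. A diagonal extraction, combined if necessary with passing to a Brunel--Sucheston spreading model of $(y_n)$, then stabilises the subsequence and produces a fixed infinite-dimensional Banach space $W$ that $K'$-embeds into every tail of $(e_n)$, contradicting the hypothesis. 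The delicate step is exactly this uniformity point: ensuring that the various embeddings obtained for different $L$ converge to embeddings of one and the same Banach space $W$, which is what makes tightness with constants directly applicable.
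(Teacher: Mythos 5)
Your choice of the function $f$ is where the argument fails, and it fails irreparably: taking $I_k=[q_k+1,q_{k+1}]$ to be the support gaps of $\bar y$ turns the desired conclusion into (essentially) tightness \emph{by range}, which does not follow from tightness with constants. Tsirelson's space $T$ is a counterexample: its basis is tight with constants, yet for a subsequence $\bar y=(e_{k_n})_{n\in\N}$ with $k_n+1<k_{n+1}$ your intervals are $I_n=[k_n+1,k_{n+1}]$, and for $A$ the set of odd integers the complement of $\bigcup_{j\in A}I_j$ still contains an increasing sequence $(m_n)$ with $k_n\leq m_n\leq k_{n+1}$ for every $n$ (take $m_n=k_n+1$ for $n$ even and $m_n=k_n$ for $n$ odd). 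By the interlacing lemma for subsequences of the Tsirelson basis (see \cite{CS}), $(e_{k_n})\approx(e_{m_n})$, so $\overline{\text{span}}\{\bar y\}$ \emph{does} embed into $\overline{\text{span}}\{e_n\mid n\notin\bigcup_{j\in A}I_j\}$. No verification argument can rescue this $f$. The intervals have to be manufactured from the hypothesis itself, which is what the paper does following the proof of Proposition 4.1 of \cite{FR}: one sets $I_j=[\max I_{j-1}+1,\,l_j]$ with $l_j$ minimal such that the finite-dimensional piece $\text{span}\{y_n\mid n\in I_j\}$ fails to $K$-embed into the tail past $l_j$ for $K=j\cdot c(\max I_{j-1}+1)$, where $c(L)$ is a perturbation constant absorbing the $L$ coordinates of $\N\setminus I_j$ lying before $I_j$. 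Any embedding of $\overline{\text{span}}\{\bar y\}$ into the complement of $\bigcup_{j\in A}I_j$ would then restrict to a $j$-witnessed contradiction for $j\in A$ arbitrarily large; continuity holds because each $I_j$ is determined by finitely many of the $y_n$.

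Your second step has an independent gap, which you flag but underestimate. The gliding hump gives, for each $L$, a $K'$-embedding of $V_L:=\overline{\text{span}}\{y_{m_i}\mid i\geq i_0(L)\}$ into the $L$-tail of $(e_n)$, but these domains shrink with $L$, while contradicting tightness with constants requires one fixed space $W$ embedding with one constant into \emph{every} tail. Extracting such a $W$ from the decreasing chain $(V_L)$ amounts to finding a space that embeds uniformly into all tails of $\overline{\text{span}}\{y_{m_i}\}$, i.e.\ to $(y_{m_i})$ \emph{failing} to be tight with constants; in the motivating examples this is false (every block subspace of $T$ is again tight with constants), so the diagonalisation cannot stabilise. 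A Brunel--Sucheston spreading model does not help either, since it need not embed into $\overline{\text{span}}\{y_n\}$ or into any tail of $X$. The paper's proof avoids any such limiting argument: each $I_j$ carries its own quantitative non-embedding statement for a finite-dimensional space, so a single embedding constant $K$ is already defeated by any one $j\in A$ with $j>K$.
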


\begin{proof}
For this, we will use details of the proof of Proposition 4.1 of \cite{FR}. Precisely, let $X\in\SB$ be a Banach space with a basis $(e_n)_{n\in\N}$ which is tight with constants. For each $L\in\N$, let $c(L)>0$ be a constant such that if two block sequences of $(e_n)_{n\in\N}$ differ from at most $L$ terms, then they are $c(L)$-equivalent.  For each $(y_n)_{n\in\N}\in bb(e_n)$, let us define a sequence $(I_j)_{j\in\N}$ of finite intervals of natural numbers. 

By Proposition 4.1 of \cite{FR}, for each $K,m\in\N$, there exists an $l>m$ such that 

\begin{align}\label{bedbug}
\text{span}\{y_n\mid m\leq n\leq l\}\not\hookrightarrow_K\overline{\text{span}}\{e_n\mid n\geq l\}.
\end{align}\\
\noindent Let $l_1\in\N$ be the minimal $l\in\N$ as above, for $m=1$, and $K=c(1)$. Let $I_1=[1,l_1]$, where if $a\leq b\in\N$, $[a,b]=\{n\in\N\mid a\leq n\leq b\}$. Assume we had already defined finite intervals $I_1<...<I_{j-1}\subset \N$ and numbers $l_1<...<l_{j-1}\in\N$. Define $l_{j}$ as  the minimal $l\in\N$ as in (\ref{bedbug}) above, for $m=\max\{I_{j-1}\}+1$, and $K=j\cdot c(\max\{I_{j-1}\}+1)$. Let $I_j=[\max\{I_{j-1}\}+1,l_j]$. 

By the proof of Proposition 4.1 of \cite{FR}, we have that, for all $K\in\N$,

$$\text{span}\{y_n\mid n\in I_K\}\not\hookrightarrow_K \overline{\text{span}}\{e_n\mid n\not\in I_K\}.$$\\
\noindent In particular, for all infinite set $A\subset \N$, 

$$\overline{\text{span}}\{y_n\}\not\hookrightarrow\overline{\text{span}}\{e_n\mid n\not\in \cup_{j\in A}I_j\}.$$\\
\noindent Hence, we had defined a map $\bar{y}=(y_n)\mapsto (I_j)_{j\in\N}$, and  we will be done if this assignment is continuous, i.e., if there exists a continuous function $f:bb(e_n)\to [\N]$ such that $I_j=[f(\bar{y})_{2j},f(\bar{y})_{2j+1}]$. For this, we only need to notice that in order to obtain a finite chunk of the sequence of intervals  $(I_j)_{j\in\N}$, say $I_1,...,I_K$, we only need to know a finite chunk of $(y_n)_{n\in\N}$, precisely, $y_1,...,y_{\max\{I_K\}}$. So we are done.
\end{proof}

\begin{prop}
$\text{CT}$, $\text{C}_\text{CT}$ and $\mathcal{CT}$ are at most $\Sigma^1_2$. 
\end{prop}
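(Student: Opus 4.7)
The plan is a direct quantifier count. Two ingredients I would use are: (i) the embedding relation $\{(X,Y)\in\SB\times\SB\mid X\emb Y\}$ is analytic (see \cite{B}), so non-embedding is $\Pi^1_1$; and (ii) continuous maps $f:bb(e_n)\to[\N]$ admit a natural coding as elements of a standard Borel space.

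For (ii), since $D$ is discrete and countable and $bb(e_n)$ is closed in $D^\N$, a continuous map $f:bb(e_n)\to\N^\N$ can be encoded by a monotone partial function $g:D^{<\N}\to\N^{<\N}\cup\{\uparrow\}$ assigning to each node an approximation to $f$ that grows along branches. The set of all such $g$ sits as a Borel subset of the Polish space $(\N^{<\N}\cup\{\uparrow\})^{D^{<\N}}$, and the condition ``$g$ codes a total continuous map into $[\N]$'' is $\Pi^1_1$ in $g$ and $(e_n)$, since it is a universal quantifier over $\bar{y}\in bb(e_n)$ applied to a Borel property.

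The central step is to unwind the definition as
\[
(e_n)\in\mathcal{CT}\;\Longleftrightarrow\;\exists g\,\bigl[\,g\text{ is a valid code for some }f_g:bb(e_n)\to[\N]\ \wedge\ \Phi(g,(e_n))\,\bigr],
\]
where $\Phi(g,(e_n))$ asserts that for every $\bar{y}\in bb(e_n)$ and every infinite $A\subset\N$,
\[
\overline{\text{span}}\{\bar{y}\}\not\emb\overline{\text{span}}\{e_n\mid n\not\in\cup_{j\in A}I_j(f_g(\bar{y}))\}.
\]
Since non-embedding is $\Pi^1_1$ and universal quantifiers over Polish spaces preserve $\Pi^1_1$, $\Phi$ is $\Pi^1_1$; conjoined with the $\Pi^1_1$ ``valid code'' condition, the matrix of the displayed equivalence is $\Pi^1_1$, and the single existential over the Polish space of codes yields $\mathcal{CT}\in\Sigma^1_2$.

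The bounds for $\text{CT}$ and $\text{C}_\text{CT}$ follow at once: $X\in\text{CT}$ iff $\exists(e_n)\in\cB$ with $\overline{\text{span}}\{e_n\}=X$ and $(e_n)\in\mathcal{CT}$, where ``is a basis for $X$'' is Borel (using the Kuratowski--Ryll-Nardzewski selectors $S_n$ to verify $X\subset\overline{\text{span}}\{e_n\}$); and $X\in\text{C}_\text{CT}$ iff $\exists Y\in\SB$ with $Y\subset X$ and $Y\in\text{CT}$. Each is an existential over a standard Borel space applied to a $\Sigma^1_2$ condition, hence $\Sigma^1_2$. The main technical obstacle will be the routine but careful verification that the intermediate assignments --- from a code $g$ and a point $\bar{y}$, extracting $(I_j)_{j\in\N}$ and forming the closed subspace $\overline{\text{span}}\{e_n\mid n\not\in\cup_{j\in A}I_j\}$ as an element of $\SB$ --- are Borel, so that the quantifier count is legitimate.
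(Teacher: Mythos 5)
Your proposal is correct and follows essentially the same route as the paper: a quantifier count in which the existential over continuous $f:bb(e_n)\to[\N]$ is replaced by an existential over monotone codes on finite block sequences (a standard Borel space), the remaining matrix is $\Pi^1_1$ via universal quantifiers over Polish spaces applied to a Borel/coanalytic non-embedding condition, and $\text{CT}$, $\text{C}_\text{CT}$ are handled by prefixing further existentials over standard Borel spaces. The only cosmetic difference is that the paper unwinds non-embedding explicitly as $\forall (x_n)\,\forall K\,(x_n)\not\approx_K(y_n)$ rather than citing the analyticity of the embedding relation.
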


\begin{proof}
This is a simple matter of counting quantifiers and the fact that we only quantify over standard Borel spaces in the definition of those three classes. Indeed, for $\mathcal{CT}$, for example, we have

\begin{align*}
(e_n)_{n\in\N}\in \mathcal{CT}\Leftrightarrow\ &\exists \text{ continuous }\ f:bb(e_n)\to[\N],\ \forall (y_n)_{n\in\N}\in bb(e_n),\\ &\forall\text{ infinite }\ A\subset \N,\ \forall (x_n)_{n\in\N}\in \text{span}\{e_n\mid n\not\in \cup_{j\in A} I_j\}^\N,\\ &\forall K\in\N,\ (x_n)_{n\in\N}\not\approx_K (y_n)_{n\in\N}.
\end{align*}\\
\indent The only quantifier that demands some explanation is $``\exists \text{ continuous }\ f:bb(e_n)\to[\N]"$. For this, let $\N^{[<N]}$ be the set of finite increasing sequence of natural numbers. Then it is easy to see that a continuous function $f:bb(e_n)\to[\N]$ gives us a function $g:fbb(e_n)\to\N^{[<N]}$ such that $g(\bar{y})\preceq g(\bar{x})$, if $\bar{y}\preceq\bar{x}$, and vice versa, where $fbb(e_n)$ is the set of finite block sequences of $(e_n)_{n\in\N}$ (see Section \ref{definition}). So, as $fbb(e_n)$ is countable, the space of functions $fbb(e_n)\to\N^{[<N]}$ is a standard Borel space, so we are done. The same arguments work for $\text{CT}$ and  $\text{C}_\text{CT}$.
\noindent 
\end{proof}

\subsection{Mininal spaces.}\label{outrodia}

It follows straight forward from the definition of minimal Banach spaces that $\text{M}=\{X\in\SB\mid X\text{ is minimal}\}$ is $\Pi^1_2$. In this subsection we show that $\text{M}$ is also $\Sigma^1_2$. Hence, $\text{M}$ is at most $\Delta^1_2$. 

Using Gowers' theorem (see \cite{Go2}), and a corollary of the solution of the distortio problem (see \cite{OS}, and \cite{OS2}), Rosendal had shown (see \cite{R}, Appendix) that if a Banach space $X$ not containing $c_0$ contains a minimal subspace then there exists a basic sequence $(e_n)_{n\in\N}$ in $X$, a block subsequence $(y_n)_{n\in\N}\in bb(e_n)$, and a continuous function $f: bb(y_n)\to \overline{\text{span}}\{e_n\}^\N$ such that, for all $\bar{z}=(z_n)_{n\in\N}\in bb(y_n)$, we have

$$\overline{\text{span}}\{f(\bar{z})\}\subset \overline{\text{span}}\{z_n\}\ \ \text{ and }\ \  (e_n)_{n\in\N}\approx f(\bar{z}).$$\\
\indent By counting quantifiers, the set of Banach spaces satisfying the property above is at most $\Sigma^1_2$. Clearly, if a Banach space satisfies the property above, then it contains a minimal subspace, indeed, $\overline{\text{span}}\{y_n\}$ is minimal. As the set of Banach spaces containing $c_0$ is $\Sigma^1_1$ ($\Sigma^1_1$-complete actually), this gives us that $\text{C}_\text{M}$ is at most $\Sigma^1_2$.  

We now notice that this also gives us an equivalent characterization of minimality. Indeed,  Rosendal's result clearly implies that if   $X$ is a minimal Banach space not containing $c_0$ then there  exists a basic sequence $(e_n)_{n\in\N}$ in $X$, a block subsequence $(y_n)_{n\in\N}\in bb(e_n)$, such that

$$X\hookrightarrow\overline{\text{span}}\{y_n\},$$\\
\noindent  and there exists a continuous function $f: bb(y_n)\to \overline{\text{span}}\{e_n\}^\N$ such that, for all $\bar{z}=(z_n)_{n\in\N}\in bb(y_n)$, we have

$$\overline{\text{span}}\{f(\bar{z})\}\subset \overline{\text{span}}\{z_n\}\ \ \text{ and }\ \  (e_n)_{n\in\N}\approx f(\bar{z}).$$\\
\indent By counting quantifiers, the set of Banach spaces satisfying the property above is at most $\Sigma^1_2$.  Notice that if a Banach space satisfies the property above, then it is minimal. Therefore, as the set of minimal Banach spaces containing $c_0$ is the set of spaces that embed into $c_0$, and as this set is easily seen to be analytic, we have that $\text{M}$ is at most $\Sigma ^1_2$. As M is also $\Pi^1_2$, we have the following.

\begin{prop}
The class of minimal Banach spaces M is at most $\Delta^1_2.$
\end{prop}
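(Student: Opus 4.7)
The plan is to establish the two inclusions $\text{M} \in \Pi^1_2$ and $\text{M} \in \Sigma^1_2$ separately and then observe $\Delta^1_2 = \Sigma^1_2 \cap \Pi^1_2$.

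For the $\Pi^1_2$ upper bound, I would unwind the definition of minimality directly. A space $X \in \SB$ is minimal if and only if every infinite dimensional subspace $Y \subset X$ contains an isomorphic copy of $X$. Using the selectors $\{S_n\}_{n\in\N}$ from Lemma~\ref{lll} to quantify over closed subspaces of $X$, and quantifying over $K \in \N$ and sequences of rational linear combinations to express the existence of a $K$-embedding $T : X \emb Y$, the predicate ``$X$ is minimal'' takes the form $\forall Y\, \exists T \, (\ldots)$ where the matrix is Borel. Since $\cF(C(\Delta))$ and the relevant sequence spaces are standard Borel, this gives $\text{M} \in \Pi^1_2$.

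For the $\Sigma^1_2$ upper bound, I would split $\text{M}$ into the minimal spaces that contain $c_0$ and those that do not. A minimal space containing $c_0$ must be isomorphic to a subspace of $c_0$, so this piece coincides with $\{X \in \SB \mid X \emb c_0\}$, which is analytic (in fact $\Sigma^1_1$) by standard coding of embeddings. For minimal spaces not containing $c_0$, I would invoke the Rosendal characterization recalled above: such an $X$ is minimal exactly when there exist a basic sequence $(e_n)_{n\in\N}$ in $X$, a block subsequence $(y_n)_{n\in\N} \in bb(e_n)$, and a continuous function $f : bb(y_n) \to \overline{\text{span}}\{e_n\}^\N$ such that $X \hookrightarrow \overline{\text{span}}\{y_n\}$ and, for every $\bar z \in bb(y_n)$, $\overline{\text{span}}\{f(\bar z)\} \subset \overline{\text{span}}\{\bar z\}$ with $(e_n) \approx f(\bar z)$. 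As in the proof that $\mathcal{CT}$ is $\Sigma^1_2$, the quantifier ``there exists a continuous $f$'' can be replaced by a quantifier over maps $g : fbb(y_n) \to \overline{\text{span}}\{e_n\}^{<\N}$ satisfying a Borel monotonicity condition, which is a quantifier over a standard Borel space. The remaining quantifiers are $\exists$ over standard Borel spaces followed by $\forall \bar z \in bb(y_n)$ and $\exists K$ for the equivalence constants, with a Borel matrix; this yields an overall $\Sigma^1_2$ description of this piece.

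The combination then gives $\text{M} \in \Sigma^1_2$, and together with the $\Pi^1_2$ upper bound we conclude $\text{M} \in \Delta^1_2$. The main obstacle is the proper coding of the continuous function $f$ as a point of a standard Borel space so as to keep the outermost quantifier existential over a standard Borel space rather than over an uncountable function space; this is handled exactly as in the $\mathcal{CT}$ computation using the countability of $fbb(y_n)$ and the one-to-one correspondence between continuous maps $bb(y_n) \to [\N]$ and monotone maps on finite block sequences. All other ingredients are routine quantifier counting.
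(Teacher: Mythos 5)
Your proposal is correct and follows essentially the same route as the paper: the $\Pi^1_2$ bound by direct quantifier counting over subspaces, and the $\Sigma^1_2$ bound by splitting $\text{M}$ into the spaces embedding into $c_0$ (an analytic set) and the minimal spaces not containing $c_0$, the latter handled via Rosendal's characterization with the extra condition $X\hookrightarrow\overline{\text{span}}\{y_n\}$ and the coding of the continuous function through monotone maps on finite block sequences. Nothing further is needed.
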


\subsection{Hereditarily indecomposable spaces.}\label{hiGM}

Let $\text{HI}=\{X\in\SB\mid X\text{ is hereditarily }\allowbreak\text{indecomposable}\}$. In Subsection \ref{masha}, we proved, using the method of $\ell_p$-Baire sums (Corollary \ref{hhii}), that $\text{C}_\text{HI}$ is $\Sigma^1_1$-hard. However, this method does not allow us to obtain any information about the complexity of $\text{HI}$. In this subsection, we will use a more complex construction in order to compute the complexity of \text{HI}.

Precisely, we follow Argyros presentation (see \cite{AT}, Chapters II and III) of how to construct HI extensions of ground norms in order to define a Borel function $\varphi:\Tr\to\SB$ such that $\varphi^{-1}(\text{HI})=\WF_\infty$, where $\WF_\infty$ denotes the subset of infinite well-founded trees. This will show that $\text{HI}$ is $\Pi^1_1$-hard. As a Banach space $X$ is hereditarily indecomposable if, and only if, for all subspaces $Z,W\subset X$ and all $\eps>0$, there exist $z\in Z$ and $w\in W$ such that $\|z-w\|<\eps\|z+w\|$, we can easily show that $\text{HI}$ is coanalytic.  Therefore, we will show that $\text{HI}$ is complete coanalytic.

\begin{thm}\label{hhhiii}
\text{HI} is coanalytic.
\end{thm}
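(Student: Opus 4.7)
The plan is to express membership in $\text{HI}$ directly from the characterization given in the excerpt, and show that the complement is analytic by writing it as a projection of a Borel set in $\SB^3\times\Q_{>0}$.

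First, I unfold the characterization. A space $X\in\SB$ fails to be hereditarily indecomposable if, and only if, there exist infinite dimensional closed subspaces $Y,Z\subset X$ and a rational $\eps>0$ such that
$$\|y-z\|\geqslant \eps\|y+z\|\quad\text{for all nonzero }y\in Y,\ z\in Z.$$
The idea is to parametrize the witnesses $(Y,Z,\eps)$ by the standard Borel space $\SB\times\SB\times\Q_{>0}$, verify that the set of tuples $(X,Y,Z,\eps)$ satisfying all these conditions is Borel in $\SB^3\times\Q_{>0}$, and then project onto the first coordinate.

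The routine Borel checks are as follows. The condition $Y\subset X$ is Borel in $\SB^2$ (this is standard for the Effros--Borel structure: $Y\subset X$ iff $S_n(Y)\in X$ for all $n\in\N$, and membership of a fixed element of $C(\Delta)$ in a closed set is Borel in $\cF(C(\Delta))$). The condition $\dim Y=\infty$ is Borel: it is equivalent to saying that for every $n\in\N$ the dense sequence $(S_k(Y))_{k\leqslant N}$ spans a space of dimension at least $n$ for sufficiently large $N$, which is a countable Boolean combination of Borel conditions about rational linear dependencies among the $S_k(Y)$'s. The main step is to observe that the ``for all nonzero $y,z$'' clause can be replaced by a countable conjunction: using the Kuratowski--Ryll-Nardzewski selectors from Lemma \ref{lll}, the condition
$$\forall y\in Y\setminus\{0\},\ \forall z\in Z\setminus\{0\},\ \|y-z\|\geqslant \eps\|y+z\|$$
is equivalent to
$$\forall n,m\in\N,\ \bigl(S_n(Y)\neq 0\ \wedge\ S_m(Z)\neq 0\bigr)\Rightarrow\|S_n(Y)-S_m(Z)\|\geqslant \eps\|S_n(Y)+S_m(Z)\|,$$
because both sides of the inequality depend continuously on $(y,z)$ and $\{S_n(Y)\}_n$, $\{S_m(Z)\}_m$ are dense in $Y$ and $Z$ respectively (so any violating pair $(y,z)$ with $y,z\neq 0$ can be approximated by selector values that are eventually nonzero). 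This is a countable conjunction of Borel conditions on $(Y,Z,\eps)$, hence Borel.

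Putting these together, the set
$$A=\bigl\{(X,Y,Z,\eps)\in\SB^3\times\Q_{>0}\ :\ Y,Z\subset X,\ \dim Y=\dim Z=\infty,\ \text{inequality holds}\bigr\}$$
is Borel, and $\SB\setminus\text{HI}$ equals the projection of $A$ onto the first coordinate, which is analytic. Therefore $\text{HI}$ is coanalytic. I do not expect any real obstacle here beyond the careful bookkeeping around the density argument; the only subtle point is that the inequality $\|y-z\|\geqslant\eps\|y+z\|$ is a closed condition in $(y,z)$, which is precisely what allows the reduction from all nonzero pairs to a dense countable family to go through.
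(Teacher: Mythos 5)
Your proposal is correct and follows essentially the same route as the paper: both use the $\eps$-characterization of hereditary indecomposability, the Borelness of the subspace relation in $\SB^2$, and the Kuratowski--Ryll-Nardzewski selectors to turn the quantifiers over vectors into countable ones. The only (purely cosmetic) difference is that you exhibit $\SB\setminus\text{HI}$ as the projection of a Borel subset of $\SB^3\times\Q_{>0}$, whereas the paper writes $\text{HI}$ directly as a universal quantification over subspaces of a Borel condition; your write-up is somewhat more careful about the infinite-dimensionality of the witnesses and the nonzero/density bookkeeping.
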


\begin{proof}
This is a simple consequence of the fact that $X\in\text{HI}$ if, and only if,  for all subspaces $Z,W\subset X$ and all $\eps>0$, there exists $z\in Z$ and $w\in W$ such that $\|z-w\|<\eps\|z+w\|$. Indeed, notice that

\begin{align*}
X\in\text{HI}\Leftrightarrow\ &\forall Z,W \subset X,\ \forall \eps\in\Q_+,\ \exists n,m\in\N \\
&\|S_{n}(Z)-S_{m}(W)\|<\eps\|S_{n}(Z)+S_{m}(W)\|,
\end{align*}\hfill

\noindent where ``$\forall Z,W \subset X"$ means ``for all subspaces $Z,W\subset X"$. As $\{(Z,W,X)\in\SB^3|Z,W\subset X\}$ is well known to be Borel (\cite{S}, see Lemma 1.9, page 73), we are done.
\end{proof}

This trivially gives us the following upper bound for the complexity of $\text{C}_\text{HI}$.

\begin{cor}
$\text{C}_\text{HI}$ is $\Sigma^1_2$.
\end{cor}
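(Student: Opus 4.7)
The plan is to express $\text{C}_\text{HI}$ as the existential projection of a $\Pi^1_1$ set, which by the very definition of the projective hierarchy yields a $\Sigma^1_2$ set. The key observation is that $\text{HI}$ is a pure class: if $X$ is hereditarily indecomposable and $Y \subset X$ is an infinite dimensional subspace, then every subspace of $Y$ is also a subspace of $X$, hence indecomposable. Therefore containing an HI subspace is the same as containing an HI closed linear subspace inside the ambient $C(\Delta)$, that is,
\begin{align*}
Y \in \text{C}_\text{HI} \Leftrightarrow\ &\exists X \in \SB,\ X \subset Y \text{ and } X \in \text{HI}.
\end{align*}

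First I would invoke the fact (used in the proof of Theorem \ref{hhhiii}, via \cite{S}, Lemma 1.9) that the relation $\{(X,Y) \in \SB^2 \mid X \subset Y\}$ is Borel. Combining this with Theorem \ref{hhhiii}, which tells us $\text{HI}$ is coanalytic, the formula inside the existential quantifier above defines a $\Pi^1_1$ subset of $\SB \times \SB$. Then existentially quantifying the variable $X$ over the standard Borel space $\SB$ places $\text{C}_\text{HI}$ in $\Sigma^1_2$, as $\Sigma^1_2$ is by definition the collection of Borel (in particular, projection) images of $\Pi^1_1$ sets in standard Borel spaces.

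There is really no substantive obstacle here; the only point one needs to be careful about is reducing ``contains an HI subspace in the sense of embedding'' to ``has a closed linear subspace in the coding $\SB$ that is HI.'' This reduction is immediate because HI is preserved by isomorphism (so the isomorphic image of any HI space inside $Y$ is itself a legitimate element of $\SB$ sitting inside $Y$), and also because HI is pure (so one does not even need to pass to the image—any infinite dimensional subspace witnessing containment already lies in HI). With this dictionary in place, the counting-of-quantifiers argument outlined above concludes the proof.
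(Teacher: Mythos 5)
Your proof is correct and is exactly the argument the paper intends: the paper derives this corollary ``trivially'' from Theorem \ref{hhhiii} by writing $\text{C}_\text{HI}$ as the projection over $X\in\SB$ of the $\Pi^1_1$ set $\{(X,Y)\mid X\subset Y \text{ and } X\in\text{HI}\}$, using the Borelness of the subspace relation. Your additional remark reconciling ``embeds'' with ``is a closed subspace'' via isomorphism-invariance of HI is a correct and worthwhile detail that the paper leaves implicit.
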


The only thing left to show is that $\text{HI}$ is $\Pi^1_1$-hard. For this, let us define a special Borel map $\varphi:\text{Tr}\to \SB$ such that $\varphi^{-1}(\text{HI})=\WF_\infty$. As the construction of such $\varphi$ will heavily rely on the construction of HI extensions of a ground norm, we tried to be consistent with Argyros notation (\cite{AT}, Chapters II and III). We believe this will make the presentation more clear for the reader which is familiar with HI spaces. Therefore, the notation used to denote some spaces in this subsection will be slightly different from the notation chosen in the rest of these notes. We will make sure to point out the differences as they appear though.  To start with, we will denote by $\mathfrak{X}(D_{G}{(\theta)})$ the resulting space $\varphi(\theta)$.

First,  we fix a compatible enumeration for $\NN$, say $(s_i)_{i\in\N}$. Let $(e_{s_i})_{i\in\N}$ be the standard unit basis of $c_{00}(\N^{<\N})$. 

Let  $x=(x(s))_{s\in\N^{<\N}}\in c_{00}(\N^{<\N})$, and $x^*=(x^*(s))_{s\in\N^{<\N}}\in c_{00}(\N^{<\N})$, we define

$$x^*(x)=\sum_{s\in\N^{<\N}}x(s)x^*(s),$$\\
\noindent i.e., the notation $ ``^*"$ means that we will consider $x^*$ as a functional on $c_{00}(\N^{<\N})$. For  $x^*=(x^*(s))_{s\in\N^{<\N}}\in c_{00}(\N^{<\N})$, we let $\text{supp}(x^*)=\{s\in\N^{<\N}\mid x^*(s)\neq 0\}$.

Let

$$G=\big\{\sum_{i=1}^n a_ie_{s_{j_i}}\mid n\in\N, s_{j_i}\in\N^{<\N},s_{j_1}\prec...\prec s_{j_n},|a_i|= 1\big\}.$$\\
\indent The set $G$ is called a \emph{ground set} (for a definition of ground sets see \cite{AT}, Definition II.1, page 21). We define $Y_{G}$ as the completion of $c_{00}(\N^{<\N})$ under the norm

$$\|x\|_{G}	=\sup\{g(x)\mid g\in G\},$$\\
\noindent for all $x\in c_{00}(\N^{<\N})$. For each $\theta\in \Tr$, we let $Y_{G}(\theta)$ denote the subspace of $Y_G$ generated by $\{e_s\mid s\in\theta\}$. So $Y_G(\N^{<\N})=Y_G$. We will define $\mathfrak{X}(D_{G}{(\theta)})$ as a ground norm extension of $Y_{G}{(\theta)}$.

\textbf{Remark:} Notice that, according to the notation of \text{Lemma} \ref{arroto}, it is clear that $Y_{G}{(\theta)}\cong \varphi_{\cE,0}(\theta)$, if $\cE$ is the standard $\ell_1$-basis. 

For each $j\in \N$, let $\cA_j=\{F\subset \N\mid |F|\leqslant j\}$, where $|F|$ is the cardinality of $F\subset \N$. As in Subsection \ref{sectionminimal}, for finite sets $E_1,E_2\subset \N$, we write $E_1<E_2$ if $\max E_1<\min E_2$. Let $g_1,...,g_n\in c_{00}(\N^{<\N})$. We write $g_1<...<g_n$ if $\text{supp} (g_1)<...<\text{supp} (g_n)$. Let $(g_l)_{l=1}^{n}$ be a finite sequence in $c_{00}(\N^{<\N})$ such that $g_1<...<g_n$, and $m\in\N$, we define the \emph{$(\cA_{n}, \frac{1}{m})$-operation on $(g_l)_{l=1}^{n}$}  as the functional $g=\frac{1}{m}(g_1+...+g_{n})$.

Fix two sequences of natural numbers $(m_j)_{j\in\N}$ and $(n_j)_{j\in\N}$ such that $m_1=2$, $m_{j+1}=m_j^5$, $n_1=4$, and $n_{j+1}=(5n_j)^s$, where $s_j=\log_2 m_{j+1}^3$.

We now define a norming set $D_{G}\subset c_{00}(\N^{<\N})$ that will give us the norm we will use to define $\mathfrak{X}(D_{G}{(\theta)})$, i.e., $\varphi(\theta)$.  In the definition below we use the term \emph{``$n_{2j-1}$-special sequence$"$}. As this definition will play no role in our proof of the theorem and as it is a really technical definition, we chose to omit it here. The interested reader can find the precise definition of an $n_{2j-1}$-special sequence in \cite{AT}, Chapter III, page $40$. 

Let $E\subset \N^{<\N}$, and $x^*=(x^*(s))_{s\in \N^{<\N}}\in c_{00}(\N^{<\N})$. We define $Ex^*=(x^*(s))_{s\in E}$.

\begin{defn}
We define $D_{G}$ as the minimal subset of $c_{00}(\N^{<\N})$ satisfying

\begin{enumerate}[(i)]
\item $G\subset D_{G}$.
\item $D_{G}$ is symmetric, i.e., $g\in D_{G}$ implies $-g\in D_{G}$.
\item $D_{G}$ is closed under the restriction of its elements to intervals of $\N^{<\N}$, i.e., if $E\subset\N^{<\N}$ is an interval and $g\in D_{G}$, then $Eg\in D_{G}$.
\item $D_{G}$ is closed under the $(\cA_{n_{2j}}, \frac{1}{m_{2j}})$-operations, i.e., if $(g_l)_{l=1}^{n_{2j}}$ is a sequence in $D_{G}$ such that $g_1<...<g_{n_{2j}}$, then $g=\frac{1}{m_{2j}}(g_1+...+g_{n_{2j}})$  belongs to $D_{G}$.
\item $D_{G}$ is closed under $(\cA_{n_{2j-1}}, \frac{1}{m_{2j-1}})$-operations on special sequences, i.e., for every $n_{2j-1}$-special sequence $(g_1,...,g_{n_{2j-1}})$ in $D_{G}$, the functional $g=\frac{1}{m_{2j-1}}(g_1+...+g_{n_{2j-1}})$  belongs to $D_{G}$.
\item $D_{G}$ is rationally convex.
\end{enumerate}
\end{defn}

We define a norm on  $c_{00}(\N^{<\N})$, as 

$$\|x\|_{D_G}=\sup\{g(x)\mid g\in D_{G}\},$$\\
\noindent for all $x\in c_{00}(\N^{<\N})$. Let $\mathfrak{X}(D_{G})$ be the completion of $c_{00}(\N^{<\N})$ under this norm. For each $\theta\in\Tr$, let $\mathfrak{X}(D_{G}(\theta))$ be the subspace of $\mathfrak{X}(D_{G})$ generated by $\{e_s\mid s\in\theta\}$. Therefore, for each $\theta\in\Tr$, we assign a space $\varphi(\theta)=\mathfrak{X}(D_{G}{(\theta)})$. 

Identify $\mathfrak{X}(D_{G}(\N^{<\N}))=\mathfrak{X}(D_{G})$ with one of its isometric copies in $\SB$. It is clear that the map $\varphi:\Tr\to \SB$ such that $\varphi(\theta)=\mathfrak{X}(D_{G}(\theta))$, is Borel (see \cite{S}, Proposition 3.1, page 79, for similar arguments).

\begin{thm}\label{ultimoo}
Let $\varphi:\Tr\to\SB$ be the function defined above. Then

\begin{enumerate}[(i)]
\item $\varphi(\theta)=\mathfrak{X}(D_{G}{(\theta)})$ contains $\ell_1$, for all $\theta\in\IF$.
\item $\varphi(\theta)=\mathfrak{X}(D_{G}{(\theta)})$ is hereditarily indecomposable, for all $\theta\in\WF_\infty$.
\end{enumerate}

\noindent In particular, $\text{HI}$ is $\Pi^1_1$-hard, and $\text{C}_{\ell_1}$ cannot be Borel separated from $\text{HI}$.
\end{thm}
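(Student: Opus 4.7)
For (i), fix $\theta \in \IF$ and an infinite branch $\beta$ of $\theta$, so that $(\beta_{|n})_{n\in\N}$ is an infinite chain in $\theta$. For any $(c_n) \in c_{00}(\N)$, the functional $g = \sum_n \mathrm{sign}(c_n)\, e^{*}_{\beta_{|n}}$ lies in the ground set $G \subseteq D_G$ and satisfies $g\bigl(\sum_n c_n e_{\beta_{|n}}\bigr) = \sum_n |c_n|$. A short induction on the definition of $D_G$ shows conversely that every $h \in D_G$ obeys $|h(e_s)| \leq 1$ for each $s$: each $(\cA_{n_j}, 1/m_j)$-operation sees any fixed basis vector through at most one of its disjointly-supported summands, while interval restriction and rational convex combinations obviously preserve the bound. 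The triangle inequality then gives $\|\sum_n c_n e_{\beta_{|n}}\|_{D_G} \leq \sum_n |c_n|$, so $(e_{\beta_{|n}})_{n\in\N}$ is isometrically equivalent to the unit vector basis of $\ell_1$ and (i) follows.

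For (ii), the plan is to follow verbatim the HI-extension construction of Argyros-Todorcevic (\cite{AT}, Chapters II and III), with the single new point being that every extraction is performed inside the tree $\theta$ rather than inside $\N^{<\N}$. The essential input the construction requires of the ground norm is that it be ``$c_0$-like on block sequences with completely incomparable supports''. By the Remark above, $Y_G(\theta) \cong \varphi_{\cE,0}(\theta)$ with $\cE$ the standard $\ell_1$-basis, and Lemma \ref{arroto} then gives that $Y_G(\theta)$ is $c_0$-saturated whenever $\theta \in \WF$; inspection of the argument of Lemma \ref{lemageral} in fact identifies every semi-normalized block sequence of incomparable supports with the $c_0$-basis, which is exactly what is needed. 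With this ground-norm property in hand, the three-step Gowers-Maurey/Argyros-Todorcevic scheme runs unchanged inside $\theta$: (a) every block subspace of $\mathfrak{X}(D_G(\theta))$ contains $\ell_1^k$-averages of arbitrarily large length; (b) such averages are concatenated into $(m_j,n_j)$-rapidly increasing sequences obeying the basic inequality $\|\sum_{i=1}^{n_j}x_i\|_{D_G} \lesssim n_j/m_j$; and (c) given two infinite-dimensional subspaces $Z,W$ of $\mathfrak{X}(D_G(\theta))$ and $\eps>0$, pairing RIS drawn from $Z$ and $W$ with an $n_{2j-1}$-special sequence of functionals produces $z\in Z$, $w\in W$ with $\|z-w\|_{D_G} < \eps\|z+w\|_{D_G}$. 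The two uses of $\theta \in \WF_\infty$ are transparent: well-foundedness ensures the ground norm cooperates at step (a), and infiniteness ensures $\mathfrak{X}(D_G(\theta))$ is infinite-dimensional in the first place. The main obstacle is purely bookkeeping: one must verify that every operation defining $D_G$ sends functionals supported in $\theta$ to functionals supported in $\theta$, which is immediate since restriction, convex combinations, and the $\cA$-operations never enlarge supports, so all Argyros-Todorcevic extractions remain inside $\theta$ throughout.

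For the ``in particular'' part, items (i) and (ii) together say that $\varphi$ is a Borel map with $\varphi(\WF_\infty) \subseteq \text{HI}$ and $\varphi(\IF) \subseteq \text{C}_{\ell_1}$, and these two images are disjoint because HI spaces contain no copy of $\ell_1$ (any infinite-dimensional subspace of $\ell_1$ decomposes as a direct sum along odd/even coordinates). The set $\WF_\infty$ is $\Pi^1_1$-hard: precompose any Borel reduction of a given $\Pi^1_1$ set into $\WF$ with the Borel self-map $T \mapsto T \cup \{(n) : n \in \N\}$ of $\Tr$, which adjoins a countable antichain of length-one leaves and therefore sends $\WF$ into $\WF_\infty$ (adding only length-one nodes creates no new branches but forces infiniteness). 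Hence $\varphi$ witnesses that $\text{HI}$ is $\Pi^1_1$-hard. Finally, a hypothetical Borel set $B$ separating $\text{HI}$ from $\text{C}_{\ell_1}$ would pull back along $\varphi$ to a Borel separation of $\WF_\infty$ from $\IF$ in $\Tr$, which is impossible since $\WF$ is $\Pi^1_1$-complete and hence non-Borel; so $\text{C}_{\ell_1}$ cannot be Borel separated from $\text{HI}$.
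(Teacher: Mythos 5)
Your part (i) and the ``in particular'' part are fine and essentially match the paper (the paper is terser: it notes that on a branch the $G$-norm is the $\ell_1$-norm and dominates the $D_G$-norm, so $\mathfrak{X}(D_G(\beta))\cong\ell_1$; your explicit induction showing $|h(e_s)|\le 1$ for $h\in D_G$, and your reduction $T\mapsto T\cup\{(n):n\in\N\}$ witnessing $\Pi^1_1$-hardness of $\WF_\infty$, are correct elaborations of what the paper leaves implicit).

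For part (ii), however, you take a genuinely different route, and it has a gap. The paper does \emph{not} re-run the Gowers--Maurey/Argyros--Todorcevic scheme inside $\theta$. Instead it invokes a single black box from \cite{AT} (Theorem III.7): if $Z$ is an infinite-dimensional subspace of the full extension $\mathfrak{X}(D_G)$ on which the identity $Id:Z\to Y_G$ is strictly singular, then $Z$ is hereditarily indecomposable. Since $\mathfrak{X}(D_G(\theta))$ \emph{is} a subspace of $\mathfrak{X}(D_G)$, the whole of (ii) reduces to proving that $Id:\mathfrak{X}(D_G(\theta))\to Y_G(\theta)$ is strictly singular when $\theta\in\WF_\infty$. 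That is the only step that is new to this paper, and it is proved by an explicit computation: if $Id$ were an isomorphism on some subspace $Y$, then by the $c_0$-saturation of $Y_G(\theta)$ (Lemma \ref{arroto} plus the refinement in Lemma \ref{porque}) one finds a normalized $c_0$-sequence $(y_i)$ with ground functionals $g_i\in G$, $g_i<g_{i+1}$, $g_i(y_i)>\frac12$, $|g_i(y_k)|<2^{-(k+2)}$, and then the $(\cA_{n_{2j}},\frac{1}{m_{2j}})$-operation forces $\|\sum_{i=1}^{n_{2j}}y_i\|_{D_G}\ge \frac{n_{2j}}{4m_{2j}}\to\infty$ while $\|\sum_{i=1}^{n_{2j}}y_i\|_G$ stays bounded --- contradiction. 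In your proposal this is exactly the content of your step (a), which you justify only by saying that ``well-foundedness ensures the ground norm cooperates''; that assertion is the theorem's entire new content and is never argued. A second problem with your plan: performing ``every extraction inside $\theta$'' requires the odd operations (the $n_{2j-1}$-special sequences, which are defined via a coding on all of $\N^{<\N}$) to supply enough special functionals supported in $\theta$, a nontrivial bookkeeping issue your ``supports never enlarge'' remark does not address. The paper's subspace formulation sidesteps this entirely, since the special sequences live in the ambient $D_G$ and Theorem III.7 of \cite{AT} is stated for arbitrary subspaces of $\mathfrak{X}(D_G)$. So: right skeleton for (i) and the consequences, but for (ii) you should replace the ``rerun the construction'' plan by the strict-singularity reduction and actually carry out the $\frac{n_{2j}}{4m_{2j}}$ lower-bound estimate.
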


\begin{proof} First, notice that if $\theta\in\IF$ then $\ell_1\emb\varphi(\theta)$. Indeed,  on segments $I \subset\theta$, the $\ell_1$-norm given by the ground set $G$ is greater than the norm given by $D_{G}$. So, if $\theta$ has a branch, say $\beta$, we have $\mathfrak{X}(D_{G}{(\beta)})\cong\ell_1$.

In order to show the second part of the theorem consider the ``identity$"$ map $Id:\mathfrak{X}(D_{G}{(\theta)})\to Y_{G}{(\theta)}$. We will show that $Id:\mathfrak{X}(D_{G}{(\theta)})\to Y_{G}{(\theta)}$ is strictly singular, for all $\theta\in\WF_\infty$. Once we do that, we will be done by \text{Theorem III.7} of \cite{AT} (page $42$). Indeed, it is clear from the proof of \text{Theorem III.7} of \cite{AT}, that we have the following.

\begin{thm}
Let $G$ be a ground set in $c_{00}(\N^{<\N})$, and let $Y_G$, and $\mathfrak{X}(D_{G})$ be the spaces obtained as above. If $Z\subset \mathfrak{X}(D_{G})$ is an infinite dimensional subspace, and the restriction $Id_{|Z}:Z\to Y_G$ is strictly singular, then $Z$ is hereditarily indecomposable.
\end{thm}

\begin{prop}\label{ss}
Let $\theta\in\WF_\infty$. Then $Id:\mathfrak{X}(D_{G}{(\theta)})\to Y_{G}{(\theta)}$ is strictly singular.
\end{prop}

\begin{proof}

Suppose not. Then there exists an infinite dimensional subspace $Y\subset \mathfrak{X}(D_{G}{(\theta)})$ such that $Id_{|Y}$ is an isomorphism with its image. We now look at $Y=Id(Y)\subset Y_{G}{(\theta)}$. By \text{Lemma} \ref{arroto} and the remark following the definition of $Y_{G}{(\theta)}$, there is a normalized sequence $(y_i)_{i\in\N}$ in $Y\subset Y_{D_{G}{(\theta)}}$ which is equivalent to the standard basis of $c_0$. In particular, there exists $C>0$ such that

$$\big\|\sum_{i=1}^ny_i\big\|_{G}<C,\ \text{ for all }n\in \N.$$\\
\indent Moreover, we can assume, by Lemma \ref{porque} below, that there exists a sequence $(g_i)_{i\in\N}$ in $G$ such that $g_i(y_i)>\frac{1}{2}$ and $g_i<g_{i+1}$, for all $i\in\N$, and $|g_i(y_k)|<2^{-(k+2)}$, for all $i\neq k$. Therefore, by the definition of the norm of $\mathfrak{X}(D_{G}{(\theta)})$, we have that 

\begin{align*}\big\|\sum_{i=1}^{n_{2j}}y_i\big\|_{D_G}
&\geq\frac{1}{m_{2j}}\sum_{i=1}^{n_{2j}}g_i(y_i)+\frac{1}{m_{2j}}\sum_{\substack{i,k=1\\ i\neq k}}^{n_{2j}}g_i(y_k)\\
&\geq\frac{n_{2j}}{2m_{2j}}-\frac{n_{2j}}{m_{2j}}\sum_{k=1}^\infty 2^{-(k+2)}\\
&=\frac{n_{2j}}{4m_{2j}}
\end{align*}\\
As $\frac{n_{2j}}{m_{2j}}\to\infty$, as $j\to\infty$, and as $Id_{|Y}$ is an isomorphism, we get a contradiction.
\end{proof}

The proof of Theorem \ref{ultimoo} is now done.
\end{proof}

\begin{thm}\label{hhhhiiii}
\text{HI} is complete coanalytic.
\end{thm}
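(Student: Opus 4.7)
The plan is simply to combine two results already in hand. Theorem \ref{hhhiii} gives $\text{HI}\in\Pi^1_1$, so what remains is $\Pi^1_1$-hardness, and for this Theorem \ref{ultimoo} provides essentially all the machinery; only a routine adjustment turning a reduction to $\WF$ into one landing in $\WF_\infty$ is missing.

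First I would check that $\varphi^{-1}(\text{HI})=\WF_\infty$ for the map $\varphi$ of Theorem \ref{ultimoo}. For $\theta\in\IF$, part (i) gives $\ell_1\emb\varphi(\theta)$; since $\ell_1$ has an unconditional basis it decomposes as a direct sum of two infinite dimensional subspaces, so $\varphi(\theta)$ contains a non-HI subspace and is therefore not HI. For finite $\theta$, $\varphi(\theta)$ is finite dimensional and hence not HI by convention. For $\theta\in\WF_\infty$, part (ii) gives $\varphi(\theta)\in\text{HI}$.

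Second, I would verify that $\WF_\infty$ is $\Pi^1_1$-hard. Starting from the classical fact that $\WF$ is $\Pi^1_1$-complete (\cite{K}, Theorem 27.1), given any Borel $g:Y\to\Tr$ with $g^{-1}(\WF)=B$ for a $\Pi^1_1$ set $B\subset Y$, I would define a Borel $\tilde g:Y\to\Tr$ by attaching to a shifted copy of $g(y)$ an infinite family of pairwise incomparable finite chains of strictly increasing length, all rooted at $\emptyset$. The tree $\tilde g(y)$ is then always infinite, while every infinite branch must lie in the shifted copy of $g(y)$ (the appended chains being finite), so $\tilde g(y)\in\WF_\infty$ iff $g(y)\in\WF$ iff $y\in B$.

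The composition $\varphi\circ\tilde g$ is then a Borel reduction of $B$ to $\text{HI}$, giving $\Pi^1_1$-hardness. Combined with Theorem \ref{hhhiii} this yields the claimed completeness. There is no substantial obstacle at this stage; the real content lies entirely in the construction of $\varphi$ and in the strict singularity argument of Proposition \ref{ss}, both of which have already been carried out above.
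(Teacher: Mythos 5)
Your proposal is correct and follows essentially the same route as the paper: Theorem \ref{hhhiii} supplies the $\Pi^1_1$ upper bound, and Theorem \ref{ultimoo} supplies the hardness via $\varphi^{-1}(\text{HI})=\WF_\infty$. The only thing you add is the (routine, and correct) explicit reduction of $\WF$ to $\WF_\infty$, which the paper leaves implicit as a standard fact.
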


In order to prove the result above we made use of the following lemma.

\begin{lemma}\label{porque}
Let $\varphi_{\mathcal{E},0}:\Tr\to\SB$ be as in Lemma \ref{arroto}. If $\theta\in\WF$, and  $Y\subset \varphi_{\mathcal{E},0}(\theta)$ is infinite dimensional, then there exists a normalized sequence $(y_i)_{i\in\N}$ in $Y$ equivalent to the $c_0$-basis. Moreover, there exists a sequence $(g_i)_{i\in\N}$ in $G(\theta)$ such that $g_i<g_{i+1}$, for all $i\in\N$, $g_i(y_i)>\frac{1}{2}$, for all $i\in\N$, and $|g_i(y_k)|<2^{-(k+2)}$, for all $i\neq k$.
\end{lemma}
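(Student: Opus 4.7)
My plan is to combine Lemma \ref{lemageral} (producing blocks with incomparable supports) with the observation that, in $\varphi_{\cE,0}(\theta)$, a tree-chain in $\theta$ directly gives a norming functional in $G(\theta)$.

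First I apply Lemma \ref{lemageral} to $Y\subset\varphi_{\cE,0}(\theta)$, where $\cE$ is the standard $\ell_1$-basis so that $\varphi_{\cE,0}(\theta)=Y_G(\theta)$. This produces a normalized sequence $(y_k)_{k\in\N}\subset Y$ equivalent to a semi-normalized block sequence $(x_k)_{k\in\N}$ of $(e_s)_{s\in\theta}$ with completely incomparable supports. By choosing the perturbation parameters inside that proof sufficiently small, and rescaling $(x_k)$ to norm one, I may assume $\|x_k\|=1$ and $\|y_k-x_k\|<2^{-(k+2)}$. Any segment $I$ of $\theta$ is a tree-chain, so by complete incomparability $I$ meets $\text{supp}(x_k)$ for at most one $k$; hence
\[
\Big\|\sum_k a_k x_k\Big\|_{\cE,0,\theta}=\max_k |a_k|,
\]
so $(x_k)$, and therefore $(y_k)$, is equivalent to the $c_0$-basis.

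For the functionals, recall that $\|x_i\|_{\cE,0,\theta}=\sup_C \sum_{s\in C}|x_i(s)|$ where $C$ ranges over finite tree-chains in $\theta$, and that this supremum is attained since $x_i$ is finitely supported. Pick a chain $C_i\subseteq\text{supp}(x_i)$ realising $\sum_{s\in C_i}|x_i(s)|=\|x_i\|=1$, and set $g_i=\sum_{s\in C_i}\text{sgn}(x_i(s))\,e_s\in G(\theta)$. Then $g_i(x_i)=1$, and by complete incomparability $C_i\cap\text{supp}(x_k)=\emptyset$ whenever $k\neq i$, so $g_i(x_k)=0$. Because $(x_k)$ is a block sequence, $\text{supp}(x_i)<\text{supp}(x_{i+1})$ in the enumeration order on $\NN$, and $\text{supp}(g_i)\subseteq\text{supp}(x_i)$ then forces $g_i<g_{i+1}$.

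Since $g_i\in G$ and $\|\cdot\|_{\cE,0,\theta}=\sup_{g\in G}g(\cdot)$, the dual norm satisfies $\|g_i\|_*\leq 1$. Therefore $g_i(y_i)\geq g_i(x_i)-\|y_i-x_i\|>1-2^{-(i+2)}>1/2$ and, for $k\neq i$, $|g_i(y_k)|=|g_i(y_k-x_k)|\leq\|y_k-x_k\|<2^{-(k+2)}$. The main obstacle is the quantitative interleaving: one must choose the $(\eps_k)$ inside Lemma \ref{lemageral} small enough that, even after renormalising the semi-normalized blocks $(x_k)$, the bound $\|y_k-x_k\|<2^{-(k+2)}$ still holds. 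Once this is secured, the functional construction is essentially combinatorial, since the orthogonality $g_i(x_k)=0$ for $i\neq k$ is exact, not merely approximate, by virtue of a chain lying entirely inside one completely-incomparable support.
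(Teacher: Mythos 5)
Your overall architecture is the paper's: run Lemma \ref{lemageral} to get completely incomparably supported blocks, then norm each block by a functional in $G(\theta)$ supported on a single chain, using incomparability for the exact orthogonality $g_i(x_k)=0$ ($i\neq k$). That part is sound. But there is a genuine gap at the step ``I may assume $\|y_k-x_k\|<2^{-(k+2)}$.'' Lemma \ref{lemageral} does \emph{not} produce a sequence in $Y$ that is norm-close to the blocks; it produces a sequence that is merely \emph{equivalent} to them. In its proof, the vectors actually lying in $Y$ are preimages $y'_k=\left(P_s|_Z\right)^{-1}(y_{n_k})$, and only their projections $P_s(y'_k)=y_{n_k}$ are small perturbations of the blocks $x_k$. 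The residual component $(I-P_s)(y'_k)$, supported off $\{\tau\mid s\preceq\tau\}$, is controlled only by the isomorphism constant of $P_s|_Z$ and can be of order $1$; no choice of the $\eps_k$ inside that proof makes it small. Consequently your estimates $g_i(y_i)\geq g_i(x_i)-\|y_i-x_i\|$ and $|g_i(y_k)|\leq\|y_k-x_k\|$ are not justified. The paper repairs exactly this point by reopening the proof of Lemma \ref{lemageral}: it keeps track of $y'_k\in Z\subset Y$, arranges $\mathrm{supp}(g_i)\subset\mathrm{supp}(x_i)\subset\{\tau\mid s\prec\tau\}$, and observes that $g_i$ then annihilates $(I-P_s)(y'_k)$ \emph{exactly}, so that $g_i(y'_i)=g_i(y_{n_i})=g_i(x_i)$ and $g_i(y'_k)=g_i(y_{n_k}-x_k)$, the latter being small because the norm-closeness does hold between $y_{n_k}$ and $x_k$.

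A secondary issue: the blocks $x_k=Q_{s,l_k}(y_{n_k})-Q_{s,l_{k-1}}(y_{n_k})$ coming out of Lemma \ref{lemageral} are not finitely supported, so your claims that the supremum defining $\|x_i\|_{\cE,0,\theta}$ is attained on a finite chain inside $\mathrm{supp}(x_i)$ and that $g_i<g_{i+1}$ follows from blockness both need an extra truncation. The paper inserts the projections $I_{m_k}$ onto the first $m_k$ coordinates (and passes to a subsequence) precisely to make the $x_k$ finitely supported successive blocks before choosing the $g_i$; you should do the same, at the cost of settling for $g_i(x_i)>\tfrac12$ rather than $g_i(x_i)=\|x_i\|$.
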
	
 

\begin{proof}
This can be obtained by a simple modification in the proof of Lemma \ref{lemageral}. For completeness, we write the modifications here. Assume all the notation in the proof of Lemma \ref{lemageral}. So we have $X=\varphi_{\eps,0}(\theta)$, and $s\in\theta$ is such that $P_s:Y\to X$ is not strictly singular, but $Q_{s,n}:Y\to X$ is strictly singular, for all $n\in\N$. Let $E=P_s(Z)$, where $Z\subset Y$ is a subspace such that $P_s:Z\to X$ is an isomorphism with its image. As in Lemma \ref{lemageral}, we can assume that

$$E\subset \overline{\text{span}} \{e_\tau \mid s\prec\tau\}.$$\\
\indent For each $m\in\N$, let $I_m:X\to X$ be the standard projection over the first $m$ coordinates of the basis $(e_s)_{s\in\theta}$, i.e., $I_m(\sum_{s\in\theta}a_se_s)=\sum_{i=1}^ma_{s_i}e_{s_i}$.  Let $(y_k)_{k\in\N}$ be a normalized sequence in $E$ such that $Q_{s,n}(y_k)\to 0$, as $k\to\infty$, for all $n\in\N$ (we showed such sequence exists in the proof of Lemma \ref{lemageral}). As $Q_{s,n}(x)\to x$, as $n\to\N$, for all 
$x\in E$, we have that given a sequence $(\eps_k)_{k\in\N}$ of positive real numbers, we can pick increasing sequences of natural numbers $(n_k)_{k\in\N}$, $(m_k)_{k\in\N}$, and $(l_k)_{k\in\N}$ such that

\begin{enumerate}[(i)]
\item  $\|Q_{s,{l_k}}(y_{n_k})-y_{n_k}\|_\theta<\eps_k$, for all $k\in\N$, and
\item $\|Q_{s,{l_k}}(y_{n_{k+1}})\|_\theta<\eps_k$, for all $k\in\N$.
\item$\|I_{m_k}\big(Q_{s,{l_k}}(y_{n_k})-Q_{s,{l_{k-1}}}(y_{n_k})\big)-\big(Q_{s,{l_k}}(y_{n_k})-Q_{s,{l_{k-1}}}(y_{n_k})\big)\|_\theta<\eps_k$, for all $k\in\N$.
\end{enumerate}

For each $k\in \N$, let 

$$x_k=I_{m_k}\big(Q_{s,{l_k}}(y_{n_k})-Q_{s,{l_{k-1}}}(y_{n_k})\big).$$ \\
\noindent Choosing $(\eps_k)_{k\in\N}$ converging to $0$ sufficiently fast, we have that $(x_k)_{k\in\N}$ is equivalent to $(y_{n_k})_{k\in\N}$, and, as $(x_k)_{k\in\N}$ has completely incomparable supports, it is easy to see that $(x_k)_{k\in\N}$ is also equivalent to the $c_0$-basis (as in Lemma \ref{arroto}). Also, by taking a subsequence if necessary, we can assume that $(x_k)_{k\in\N}$ is a block sequence. Hence, as we can assume $\|x_i\|_\theta>\frac{1}{2}$, for all $i\in\N$, there exists a sequence $(g_i)_{i\in\N}$ in $G(\theta)$ such that $g_i<g_{i+1}$, for all $i\in\N$, and $g_i(x_i)>\frac{1}{2}$, for all $i\in\N$, and $g_i(x_k)=0$, for all $i\neq k$. 

Clearly, we can assume $\text{supp}(g_i)\subset \text{supp}(x_i)$, for all $i\in\N$. Hence, if $(y'_i)_{i\in\N}$ is a sequence in $Z$ such that $P_s(y'_i)=y_{n_i}$, for all $i\in\N$, we have that  $g_i(y'_i)=g_i(y_{n_i})=g_i(x_i)$, for all $i\in\N$. Therefore,   $g_i(y'_i)>\frac{1}{2}$, for all $i\in\N$. 

On the other hand,  it is easy to see that $g_i(y'_k)=g_i(y_{n_k}-x_k)$, for all $i\neq k$. Hence, as $\|y_{n_k}-x_k\|_\theta<2\eps_k+\eps_{k-1}$, we can assume that $|g_i(y'_k)|<2^{-(k+2)}$, for all $i\neq k$. Although $(y'_k)_{k\in\N}$ is only semi-normalized, it is clear from the proof, that we can assume $(y'_k)_{k\in\N}$ is normalized, so we are done.
\end{proof}

\subsection{Unconditional basis.}\label{ubub} A basic sequence $(x_j)_{j\in\N}$ in a Banach space $X$ is unconditional if, and only if, there exists $M>0$ such that, for all $n\in\N$, for all $a_1,...,a_n\in\Q$, and all $b_1,...,b_n\in\Q$ such that $|a_j|\leqslant |b_j|$, for all $j\in\{1,...,n\}$, we have

$$\|\sum_{j=1}^ka_jx_j\|\leqslant \|\sum_{j=1}^kb_jx_j\|.$$\\
\noindent Therefore, it is clear that the set $\mathcal{UB}\subset \cB$ of unconditional basis is Borel, where $\cB$ is our coding for the basic sequences (see Section \ref{definition}).

We now consider instead of the set of unconditional basis, the set of Banach spaces with an unconditional basis, say $\text{UB}$, and the set of Banach spaces containing an unconditional basis, say $\text{C}_\text{UB}$. As 

\begin{align*}
X\in\text{UB} \Leftrightarrow\ &\exists (x_j)_{j\in\N}\in C(\Delta)^\N \text{ such that }\\
&(x_j)_{j\in\N}\text{ is an unconditional basis for }X,
\end{align*}\\
\noindent and the condition $``(x_j)_{j\in\N}\text{ is a basis for X}"$ is easily seen to be Borel, we have that $\text{UB}$ is analytic. Analogously, $\text{C}_\text{UB}$ is also analytic. 

Let us now give a lower bound for $\text{C}_\text{UB}$. As hereditarily indecomposable spaces cannot contain an unconditional basis, Theorem \ref{ultimoo} gives us the following.

\begin{thm}\label{ubca}
The set of separable Banach spaces containing an unconditional basis $\text{C}_{\text{UB}}$ is $\Sigma^1_1$-hard. Moreover,  $\text{C}_{\text{UB}}$ is $\Sigma^1_1$-complete.
\end{thm}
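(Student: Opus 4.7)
The plan is to reuse the Borel map $\varphi:\Tr\to\SB$ from Theorem \ref{ultimoo} directly as a Borel reduction from $\IF$ to $\text{C}_\text{UB}$. Since $\IF$ is $\Sigma^1_1$-complete and the opening of this subsection has already shown $\text{C}_\text{UB}$ to be analytic, producing this reduction yields both $\Sigma^1_1$-hardness and $\Sigma^1_1$-completeness in one stroke. So the only real task is to verify $\varphi^{-1}(\text{C}_\text{UB})=\IF$.

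For the forward inclusion, suppose $\theta\in\IF$. Theorem \ref{ultimoo}(i) gives $\ell_1\emb\varphi(\theta)$, and the standard $\ell_1$-basis is unconditional, so $\varphi(\theta)$ contains an unconditional basic sequence and belongs to $\text{C}_\text{UB}$.

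For the reverse inclusion, suppose $\theta\in\WF$. If $\theta\in\WF_\infty$, Theorem \ref{ultimoo}(ii) makes $\varphi(\theta)$ hereditarily indecomposable. An HI space cannot contain any unconditional basic sequence $(x_n)_{n\in\N}$: otherwise $\overline{\text{span}}\{x_{2n}\}$ and $\overline{\text{span}}\{x_{2n+1}\}$ would realize $\overline{\text{span}}\{x_n\}$ as a topological direct sum of two infinite-dimensional subspaces, contradicting the HI property (which passes to subspaces). If instead $\theta$ is finite, then $\varphi(\theta)$ is finite-dimensional and contains no infinite basic sequence at all. Either way $\varphi(\theta)\notin\text{C}_\text{UB}$.

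There is essentially no obstacle beyond invoking Theorem \ref{ultimoo}: the heavy lifting (construction of the HI extension and verification of its properties) was done in Subsection \ref{hiGM}, and the implication ``HI $\Rightarrow$ no unconditional basic sequence'' is a standard observation. In particular, the same map $\varphi$ simultaneously witnesses the fact, noted in the statement of Theorem \ref{ultimoo}, that $\text{C}_{\ell_1}$ cannot be Borel separated from $\text{HI}$, which is why this single reduction suffices for the present theorem.
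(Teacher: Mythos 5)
Your proposal is correct and takes essentially the same route as the paper: the paper also obtains this theorem directly from Theorem \ref{ultimoo}, using the observation that a hereditarily indecomposable space cannot contain an unconditional basic sequence, together with the analyticity of $\text{C}_{\text{UB}}$ noted at the start of the subsection. Your extra care with finite well-founded trees (where $\varphi(\theta)$ is finite dimensional) is a harmless refinement of the same argument.
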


\begin{problem}
Is \text{UB} Borel? Is \text{UB} complete analytic? What about $S=\{X\in\SB\mid X\text{ has a Schauder basis}\}$? Similarly, as we have for UB, we can easily see that S is analytic. Is S Borel? Is S complete analytic? 
\end{problem}

\textbf{Acknowledgements:} The author would like to thank his adviser C. Rosendal
for all the help and attention he gave to this paper. Also, the author would like to thank Joe Diestel for reading this manuscript, and Spyros Argyros for his help in the HI part of these notes.

\end{document}